\newtheorem{theorem}{Theorem}[section]
\newtheorem{lemma}[theorem]{Lemma}
\newtheorem{corollary}[theorem]{Corollary}
\newtheorem{remark}[theorem]{Remark}
\newtheorem{conjecture}[theorem]{Conjecture}
\theoremstyle{definition}
\numberwithin{equation}{section}
\def\a{\mathfrak{a}}
\def\R{{\bf R}}
\def\C{{\bf C}}
\def\al{{\alpha_\lambda}}
\def\ax{{\alpha_X}}
\def\apx{{\alpha_{X^+}}}
\def\bl{{\beta_\lambda}}
\def\bx{{\beta_X}}
\def\bpx{{\beta_{X^+}}}
\def\gl{{\gamma_\lambda}}
\def\gx{{\gamma_X}}
\def\gpx{{\gamma_{X^+}}}
\def\ll{{\lambda_2}}
\def\lll{{\lambda_3}}
\def\to{\rightarrow}
\begin{document}

\begin{center}
{\bf A formula and sharp estimates  for the  Dunkl    kernel  for the root system $A_2$}\\
 P. Graczyk, P. Sawyer\\[10mm]
\end{center}

\section*{Abstract}
In this paper, we transform a formula for the $A_2$ Dunkl kernel by B\'echir Amri.  The resulting formula expresses the $A_2$ Dunkl kernel in terms of the $A_1$ Dunkl kernel involving only positive terms.  
This result allows us to derive sharp estimates for the $A_2$ Dunkl kernel.  As an interesting by-product, we obtain sharp esitmates for the corresponding heat kernel.

\section{Introduction}

Estimates of Dunkl kernel and of Dunkl heat kernel are a challenging problem. Sharp estimates of these kernels were proven  only in rank 1, see \cite{AnkerDT}.

In the  Dunkl setting,  estimates of the Dunkl kernel $E_k(X,\lambda)$ are equivalent to
heat kernel estimates (refer to formula \eqref{heatR}).

A function $k: \Sigma \to \R$ is called a multiplicity function if it is invariant under the action of $W$ on $\Sigma.$
	
Let $\partial_\xi$ be the derivative in the direction of $\xi\in\R^d$.   The Dunkl operators indexed by $\xi$ are then given by
\begin{align*}
T_\xi(k)\,f(X)&=\partial_\xi\,f(X)+\sum_{\alpha\in \Sigma_+}\,k(\alpha)\,\alpha(\xi)\,\frac{f(X)-f(\sigma_\alpha\,X)}{\langle \alpha,X\rangle}.
\end{align*}

The $T_\xi$'s, $\xi\in\R^d$, form a commutative family.

For fixed $ Y\in\R^d$, the Dunkl kernel $E_k(\cdot,\cdot)$ is then the only real-analytic solution to the system
\begin{align*}
\left.T_\xi(k)\right\vert_X\,E_k(X,Y)
=\langle \xi,Y\rangle\, E_k(X,Y),~\forall\xi\in\R^d
\end{align*}
with $E_k(0,Y)=1$.  In fact, $E_k$ extends to a holomorphic function  on $\C^d\times \C^d$.

Up to now, the best  estimates of the Dunkl heat kernel for all root systems  were recently proven by Dziuba\'nski and Hejna \cite{DH}. They improve earlier 
results obtained in \cite{AnkerDH}. The results of  \cite{DH}
imply immediately the estimates of the Dunkl kernel 
$E_k(X,\lambda)$ with the exponential factor of the form 
$$
e^{c\langle \lambda^+, X^+\rangle}
$$
for any $c>1$ in the upper estimate and
any $c<1$ in the lower estimate
(here $Z^+$ denotes the projection of $Z$
on the positive Weyl chamber   $\overline{C^+}$, i.e. the value of $wZ$ with $w\in W$ chosen in such a way that $wZ\in \overline{C^+}$).
Our  recent results \cite{PGPS0}  in the $W$-invariant case for the system $A_n$
and the paper \cite{AnkerDT} devoted to the $A_1$ case suggest that sharp estimates of $E_k(X,\lambda)$ with the exponential factor 
$
e^{\langle \lambda^+, X^+\rangle}
$ hold true.

It is known since Dunkl's paper
\cite{Dunkl} that the root system $A_2$
allows some explicit or integral expressions for important objects of Dunkl analysis,
in particular for the Dunkl kernel 
$E_k(X,\lambda)$. Some integral formulas for $E_k(X,\lambda)$ of the system $A_2$ were given by Amri \cite{Amri01}.

In this paper, starting from a result by Amri (\cite[Theorem 1.1]{Amri01}),  we  prove a new integral formula for the Dunkl kernel 
$E_k(X,\lambda)$ (Theorem \ref{A2A1}) on $A_2$, relating it to the Dunkl kernel in rank 1.

This formula allows us to prove sharp estimates of  the Dunkl kernel 
$E_k(X,\lambda)$  on $A_2$ (Theorem \ref{sharp})  with the exponential factor $e^{\langle \lambda^+, X^+\rangle}$. The proof uses  techniques  analogous to those that we used in \cite{PGPS0}.

\begin{remark}
In 2017, Jacek Dziuba\'nski presented an upper estimate in a Weyl chamber
(\cite{Dtalk2017}) of a similar form to the ones we have
obtained for the Dunkl estimate (based on work by Jean-Philippe Anker and Bartosz Trojan).  In June 2023, Anker and Trojan informed us that they continued their study
and showed us their current achievement containing sharp lower and upper estimates
for the Dunkl kernel in the case of root systems $A_2$ and $B_2$.
Their method is completely different from ours.
\end{remark}

{\bf Acknowledgents.}
We thank J.-P. Anker, J. Dziuba\'nski, A. Hejna and B. Trojan for communications,  discussions and remarks that 
improved and enriched this paper.

\section{Basic notation and definitions}

We consider the root system $A_2$ on the space $\a=\{X=(x_1,x_2,x_3)\in\R^3\colon x_1+x_2+x_3=0\}$.  The Weyl group is $S_3$ acting as permutations on the $x_i$'s.  The positive Weyl chamber is $C^+=\{(x_1,x_2,x_3)\in \a\colon x_1>x_2>x_3\}$.

Given a domain $D$, $f(x)\asymp g(x)$ means that there exists $C>0$ such that $C^{-1}\,g(x)\leq f(x)\leq C\,g(x)$ for all $x\in D$.  Similarly, $f(x)\lesssim g(x)$ ($f(x)\gtrsim g(x)$) means that there exists $C>0$ such that $ f(x)\leq C\,g(x)$ ($ f(x)\geq C\,g(x)$) for all $x\in D$.

For $X=(x_1,x_2,x_3)$, we denote by   $X^+$ the projection of $X$ on $C^+$, i.e. the reordering of the entries of $X$ in decreasing order. For example, if $X$ is such that $x_2\ge x_1 \ge x_3$, then $X^+=(x_2, x_1, x_3)$.

\section{A new  formula for the Dunkl kernel on $A_2$}

The Dunkl kernel $E_k^{\text{rk\,1}}$ for the system $A_1$
has the integral representation \cite{Anker}
\begin{equation} \label{rank1int}
 E_k^{\text{rk\,1}}(x,v)=
 \frac{\Gamma(k+\frac12)}{\sqrt{\pi}\Gamma(k)}
 \int_{-1}^1  e^{zxv}(1-z)^{k-1} (1+z)^k dz=
 e^{vx}\frac{\Gamma(2k+1)}{k\Gamma(k)^2} \int_0^1 e^{-2xvz}z^{k-1}(1-z)^kdz
\end{equation}

Let
$\mathcal{J}_a$ denote the modified
Bessel function of index $a$.
Then
\begin{align}\label{rank1J}
E_k^{\text{rk\,1}}( x,v)=\mathcal{J}_{k-1/2}(vx)+\frac{vx}{2k+1}\mathcal{J}_{k+1/2}(v x).
\qquad\text{\cite[Remark 3.9]{Anker}}
\end{align}
Let $X=(x_1,x_2,x_3)$ with $x_1+x_2+x_3=0$.
We denote the positive roots 
of the system $A_2$ by $\alpha(X)=x_1-x_2$, $\beta(X)=x_2-x_3$ and $\gamma=\alpha+\beta$. For simplicity, we write $\ax=\alpha(X)$ and similarly
for
$\bx,\gx,\al,\bl,\gl$.

Let $W_k(\lambda,\nu)=( (\lambda_1-\nu_1)(\lambda_1-\nu_2)(\nu_1-\lambda_2)(\lambda_2-\nu_2)(\nu_1-\lambda_3)(\nu_2-\lambda_3))^{k-1}$.

In the next theorem
we give a new integral formula relating the
Dunkl kernel $E_k(X,\lambda)$ on $A_2$
with the 
Dunkl kernel $E_k^{\text{rk\,1}}$ on $A_1$.

\begin{theorem}
\label{A2A1}  Let $\lambda\in C^+$, the positive Weyl chamber.   
We have for any $X{\in\a}$,
\begin{align}\label{Amri1}
E_k(X,\lambda)&=\frac{3\,\Gamma(3k)}{V(\lambda)^{2k}\Gamma(k)^2}
\,\int_{\lambda_3}^{\lambda_2} \int_{\lambda_2}^{\lambda_1} 
\{
(\nu_1-\lambda_2)(\lambda_1-\nu_2)\,E_k^{\text{rk\,1}}((x_1-x_2)/2,(\nu_1-\nu_2))
\\ &\qquad\qquad\qquad\nonumber
+(\lambda_1-\nu_1)(\lambda_2-\nu_2)\,E_k^{\text{rk\,1}}(-(x_1-x_2)/2,(\nu_1-\nu_2))
\}
\\ &\qquad\qquad\qquad\nonumber
\,(\nu_1-\lambda_3) (\nu_2-\lambda_3)
e^{(x_1+x_2-2x_3)(\nu_1+\nu_2)/2}
\,W_k(\lambda,\nu),d\nu_1d\nu_2
\end{align}
and
\begin{align}\label{Amri2}
E_k(X,\lambda)&=\frac{3\,\Gamma(3k)}{V(\lambda)^{2k}\Gamma(k)^2}
\,\int_{\lambda_3}^{\lambda_2} \int_{\lambda_2}^{\lambda_1} 
\{
(\nu_1-\lambda_3)(\lambda_2-\nu_2)\,E_k^{\text{rk\,1}}((x_2-x_3)/2,(\nu_1-\nu_2))
\\ &\qquad\qquad\qquad\nonumber
+(\nu_1-\lambda_2)(\nu_2-\lambda_3)\,E_k^{\text{rk\,1}}(-(x_2-x_3)/2,(\nu_1-\nu_2))
\}
\\ &\qquad\qquad\qquad \nonumber
\,(\lambda_1-\nu_1) (\lambda_1-\nu_2)
e^{(x_2+x_3-2x_1)(\nu_1+\nu_2)/2}
\,W_k(\lambda,\nu),d\nu_1d\nu_2.
\end{align}

\end{theorem}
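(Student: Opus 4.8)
The plan is to start from Amri's integral formula (\cite[Theorem 1.1]{Amri01}), which expresses $E_k(X,\lambda)$ as an integral over a two-dimensional domain of interlacing parameters $(\nu_1,\nu_2)$ with $\lambda_3\le\nu_2\le\lambda_2\le\nu_1\le\lambda_1$, weighted by the function $W_k(\lambda,\nu)$ together with a product of linear factors and an exponential term $e^{\langle X,\nu\rangle}$ (after identifying $\nu=(\nu_1,\nu_2,\nu_3)$ with $\nu_1+\nu_2+\nu_3=0$ in the appropriate way). The first step is to write down Amri's formula precisely in our normalization and to perform the change of variables that brings the domain to the rectangle $[\lambda_3,\lambda_2]\times[\lambda_2,\lambda_1]$ appearing in \eqref{Amri1}. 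The key structural observation is that the integrand of Amri's formula, as a function of the exponential variable, depends on $X$ only through the combination $(x_1-x_2)(\nu_1-\nu_2)/2$ and through the ``drift'' direction $(x_1+x_2-2x_3)/2$ (for \eqref{Amri1}), or symmetrically through $x_2-x_3$ and $(x_2+x_3-2x_1)/2$ (for \eqref{Amri2}).

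The heart of the argument is to recognize a copy of the rank-one Dunkl kernel inside Amri's integral. Concretely, after isolating the factor $e^{(x_1+x_2-2x_3)(\nu_1+\nu_2)/2}$, one is left with an integral over the remaining variable that has the shape of the second integral representation in \eqref{rank1int}: an exponential $e^{vx}$ times $\int_0^1 e^{-2xvz}z^{k-1}(1-z)^k\,dz$ with $x=(x_1-x_2)/2$ and $v=\nu_1-\nu_2$. I would make this explicit by splitting Amri's domain along the natural symmetry that exchanges the roles of the two ``inner'' interlacing variables, producing the two terms $(\nu_1-\lambda_2)(\lambda_1-\nu_2)\,E_k^{\text{rk\,1}}((x_1-x_2)/2,\nu_1-\nu_2)$ and $(\lambda_1-\nu_1)(\lambda_2-\nu_2)\,E_k^{\text{rk\,1}}(-(x_1-x_2)/2,\nu_1-\nu_2)$; the antisymmetry $x\mapsto -x$ in the second term is exactly what the reflection $\sigma_\alpha$ contributes. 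Matching the Gamma-factor normalization — going from $\frac{\Gamma(2k+1)}{k\Gamma(k)^2}$ in \eqref{rank1int} to the prefactor $\frac{3\,\Gamma(3k)}{V(\lambda)^{2k}\Gamma(k)^2}$ — requires carefully tracking the constant in Amri's formula and the Jacobian of the change of variables, and absorbing the remaining monomials into $W_k(\lambda,\nu)$.

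To get \eqref{Amri2} from \eqref{Amri1} I would invoke the $S_3$-equivariance of the Dunkl kernel: $E_k(X,\lambda)=E_k(wX,w\lambda)$ is false in general, but $E_k(wX,w\lambda)=E_k(X,\lambda)$ does hold for the symmetric realization, and more usefully one can permute the roles of the roots $\alpha,\beta,\gamma$. Applying the cyclic rotation of the coordinates $(x_1,x_2,x_3)$ (equivalently, re-deriving Amri's formula with the positive root $\beta$ playing the role of $\alpha$) turns $x_1-x_2$ into $x_2-x_3$ and $x_1+x_2-2x_3$ into $x_2+x_3-2x_1$, and permutes the linear factors of $W_k$ accordingly; one checks that $W_k(\lambda,\nu)$ is invariant under the relevant relabeling so that only the four ``outer'' linear factors get redistributed, yielding precisely \eqref{Amri2}.

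The main obstacle I anticipate is the bookkeeping in the second step: correctly identifying which pair of linear factors in Amri's original integrand combines with the exponential to reconstitute $E_k^{\text{rk\,1}}$ versus which pair stays outside as the weight $(\nu_1-\lambda_3)(\nu_2-\lambda_3)$, and simultaneously verifying that the leftover powers assemble exactly into $W_k(\lambda,\nu)=\big((\lambda_1-\nu_1)(\lambda_1-\nu_2)(\nu_1-\lambda_2)(\lambda_2-\nu_2)(\nu_1-\lambda_3)(\nu_2-\lambda_3)\big)^{k-1}$ with the constant coming out as $3\Gamma(3k)/(V(\lambda)^{2k}\Gamma(k)^2)$. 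This is a delicate but purely computational matching; the splitting into the two $E_k^{\text{rk\,1}}$-terms with opposite first arguments is the one genuinely non-obvious algebraic move, and it is forced by writing the symmetric part and the $\sigma_\alpha$-antisymmetric part of the exponential separately.
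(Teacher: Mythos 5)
Your proposal does not reach the level of a proof, and its model of the starting point is off in a way that matters. Amri's formula \cite[Theorem 1.1]{Amri01} is not an integral of linear factors against a bare exponential $e^{\langle X,\nu\rangle}$; its integrand already contains the modified Bessel function $\mathcal{J}_{k-1/2}\bigl((x_1-x_2)(\nu_1-\nu_2)/2\bigr)$ and its derivative $\mathcal{J}'_{k-1/2}$, multiplied by quadratic expressions in $(\lambda,\nu)$, and it is the difference of two terms. Consequently there is no ``domain splitting along the symmetry exchanging the two inner interlacing variables'' to perform: the two terms $E_k^{\text{rk\,1}}(\pm(x_1-x_2)/2,\nu_1-\nu_2)$ in \eqref{Amri1} arise from a \emph{pointwise algebraic rewriting of the integrand}, not from cutting the region of integration. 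The paper's proof rests on two concrete identities that your outline never produces: (a) the coefficient identity $-2\bigl(\nu_1\nu_2+(\lambda_3/2)(\nu_1+\nu_2)+\lambda_1\lambda_2\bigr)=(\lambda_1-\lambda_2)(\nu_1-\nu_2)-2(\lambda_1-\nu_1)(\lambda_2-\nu_2)$ (valid since $\lambda_1+\lambda_2+\lambda_3=0$), together with $(\lambda_1-\lambda_2)(\nu_1-\nu_2)=(\nu_1-\lambda_2)(\lambda_1-\nu_2)+(\lambda_1-\nu_1)(\lambda_2-\nu_2)$, which is what makes all coefficients positive; and (b) the identity
\begin{equation*}
E_k^{\text{rk\,1}}(x,v)-E_k^{\text{rk\,1}}(-x,v)=\frac{2vx}{2k+1}\,\mathcal{J}_{k+1/2}(vx),
\end{equation*}
proved in the paper by integration by parts in the Beta-type integral representation \eqref{rank1int}, which converts the $\mathcal{J}'_{k-1/2}$ term (via $\mathcal{J}'_a(x)=\tfrac{x}{2(a+1)}\mathcal{J}_{a+1}(x)$ and \eqref{rank1J}) into the difference of the two rank-one kernels.

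To your credit, your closing remark that the split into opposite first arguments is ``forced by the symmetric and $\sigma_\alpha$-antisymmetric parts'' is exactly the content of identity (b): $E_k^{\text{rk\,1}}(x,v)+E_k^{\text{rk\,1}}(-x,v)=2\mathcal{J}_{k-1/2}(vx)$ is the even part and the difference above is the odd part. But you flag this, and the entire coefficient bookkeeping, as an anticipated ``obstacle'' rather than resolving it — and that bookkeeping \emph{is} the theorem. The one step you do carry out correctly is the derivation of \eqref{Amri2} from \eqref{Amri1} by interchanging the roles of the roots $\alpha$ and $\beta$, which is also how the paper concludes. As it stands, the proposal identifies the right destination and a plausible heuristic, but omits the two identities that constitute the proof and replaces them with a mechanism (domain splitting of a purely exponential integrand) that does not apply to Amri's actual formula.
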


\begin{proof}
By the integral formula for $E_k$ of Amri \cite[Theorem 1.1]{Amri01} and the fact that $-2(\nu_1\nu_2+(\lambda_3/2)(\nu_1+\nu_2)+\lambda_1\lambda_2)=(\lambda_1-\lambda_2)(\nu_1-\nu_2) -2(\lambda_1-\nu_1)(\lambda_2-\nu_2)$ when 
$\lambda_1+\lambda_2+\lambda_3=0$, we find
\begin{align*}
E_k(X,\lambda)
&=\frac{3\,\Gamma(3k)}{V(\lambda)^{2k}\Gamma(k)^2}
\,\int_{\lambda_3}^{\lambda_2} \int_{\lambda_2}^{\lambda_1} 
\left\{(\lambda_1-\lambda_2)(\nu_1-\nu_2)\mathcal{J}_{k-1/2}\left(\frac{(x_1-x_2)(\nu_1-\nu_2)}{2}\right)
\right.\\&\qquad\qquad\left.
+[(\lambda_1-\lambda_2)(\nu_1-\nu_2)-2(\lambda_1-\nu_1)(\lambda_2-\nu_2)]\mathcal{J}'_{k-1/2}\left(\frac{(x_1-x_2)(\nu_1-\nu_2)}{2}\right)
\right\}
\\ &\qquad\qquad\qquad\nonumber
\,(\nu_1-\lambda_3) (\nu_2-\lambda_3)
e^{(x_1+x_2-2x_3)(\nu_1+\nu_2)/2}
\,W_k(\lambda,\nu),d\nu_1d\nu_2
\end{align*}

Noting that $\mathcal{J}'_a(x)=(x/(2\,(a+1)))\,\mathcal{J}_{a+1}(x)$ (\cite[(8)]{Amri01}) and using the formula 
\eqref{rank1J},
we obtain the following expression for
the term  $\{\dots\}$ in the last integral:
\begin{align*}
\{\dots\}
&=(\lambda_1-\lambda_2)(\nu_1-\nu_2)E_k^{\text{rk\,1}}((x_1-x_2)/2,(\nu_1-\nu_2))
\\&\qquad\qquad
-(\lambda_1-\nu_1)(\lambda_2-\nu_2)\frac{(x_1-x_2)(\nu_1-\nu_2)}{2\,k+1}\mathcal{J}_{k+1/2}\left(\frac{(x_1-x_2)(\nu_1-\nu_2)}{2}\right).
\end{align*}

Now, using the integral representation  of the modified Bessel function and
integration by parts, we get
\begin{align*}
\lefteqn{-\frac{(x_1-x_2)(\nu_1-\nu_2)}{2\,k+1}\mathcal{J}_{k+1/2}\left(\frac{(x_1-x_2)(\nu_1-\nu_2)}{2}\right)}\\
&=-2\,\frac{\Gamma(k+1/2)}{\sqrt{\pi}\Gamma(k)}\,\frac{(x_1-x_2)(\nu_1-\nu_2)}{4\,k}\,\int_{-1}^1\,e^{(x_1-x_2)(\nu_1-\nu_2)z/2}\,(1-z^2)^k\,dz\\
&=-2\,\frac{\Gamma(k+1/2)}{\sqrt{\pi}\Gamma(k)}\,\int_{-1}^1\,e^{(x_1-x_2)(\nu_1-\nu_2)z/2}\,z(1-z^2)^{k-1}\,dz\\
&=-2\,\frac{\Gamma(k+1/2)}{\sqrt{\pi}\Gamma(k)}\,\int_{-1}^1\,e^{(x_1-x_2)(\nu_1-\nu_2)z/2}\,(1+z-1)(1-z^2)^{k-1}\,dz\\
&=-2\,\frac{\Gamma(k+1/2)}{\sqrt{\pi}\Gamma(k)}\,\int_{-1}^1\,e^{(x_1-x_2)(\nu_1-\nu_2)z/2}\,(1+z)^k(1-z)^{k-1}\,dz
\\&\qquad
+2\,\frac{\Gamma(k+1/2)}{\sqrt{\pi}\Gamma(k)}\,\int_{-1}^1\,e^{(x_1-x_2)(\nu_1-\nu_2)z/2}\,(1-z^2)^{k-1}\,dz\\
&=-E_k^{\text{rk\,1}}((x_1-x_2)/2,(\nu_1-\nu_2))
\\&\qquad
+\frac{\Gamma(k+1/2)}{\sqrt{\pi}\Gamma(k)}\,\int_{-1}^1\,e^{(x_1-x_2)(\nu_1-\nu_2)z/2}\,\overbrace{[2 (1-z^2)^{k-1}-(1+z)^k(1-z)^{k-1}]}^{(1+z)^{k-1}(1-z)^{k}}\,dz\\
&=-E_k^{\text{rk\,1}}((x_1-x_2)/2,(\nu_1-\nu_2))+E_k^{\text{rk\,1}}(-(x_1-x_2)/2,(\nu_1-\nu_2))
\end{align*}
(in the last equalities we used
\eqref{rank1int}).
The  formula \eqref{Amri1} follows.  Interchanging the roles of the roots $\alpha$ and $\beta$ in \eqref{Amri1}, we obtain \eqref{Amri2}.
\end{proof}

\begin{remark}
It is important to note that in the formulas \eqref{Amri1} and \eqref{Amri2}, all terms are positive (the formula of Amri  \cite[Theorem 1.1]{Amri01} has a difference of two  terms).
Taking into account 
the recursive formula for $E^W_k$ for the
system $A_n$ given in \cite{Sawyer}, we conjecture that  the formulas \eqref{Amri1} and
\eqref{Amri2} generalize to recursive formulas for $E_k$ for the system $A_n$.
\end{remark}

\section{Sharp estimates of the Dunkl kernel and Dunkl heat kernel  on $A_2$}

For simplicity, we denote the Weyl chambers by $C_{ijk}= \{Y\colon~ y_i\ge y_j\ge y_k\}$. In particular $\overline{C^+}= C_{123}$.

\begin{theorem}\label{sharp}
We have the following estimates of the  Dunkl kernel for the system $A_2$.
\begin{itemize}
\item[(i)] For $X$ in the closed positive Weyl chamber $\overline{C^+}$ 
%(i.e. $x_1\ge x_2 \ge x_3$)
$$
E_k(X, \lambda)\asymp \frac{e^{\langle \lambda,X\rangle}}{
 (1+ \al\ax)^k(1+ \bl\bx)^k(1+ \gl\gx)^k}
$$
\item[(ii)] For $X$  in the Weyl chamber $\sigma_\alpha \overline{C^+} = C_{213}
 $
%(i.e.$x_2\ge x_1 \ge x_3$)
$$
E_k(X, \lambda)\asymp \frac{e^{\langle \lambda,X^+\rangle}}{
 (1+ \al\apx)^{k+1}(1+ \bl\bpx)^k(1+ \gl\gpx)^k}.
$$
For $X$  in the Weyl chamber $\sigma_\beta \overline{C^+} = C_{132}
 $
%(i.e.$x_1\ge x_3 \ge x_2$)
$$
E_k(X, \lambda)\asymp \frac{e^{\langle \lambda,X^+\rangle}}{
 (1+ \al\apx)^{k}(1+ \bl\bpx)^{k+1}(1+ \gl\gpx)^k}
$$
%%%%%%%%%%%%%%%%%%%%%%%%%%%%%%%%%%%%%%%%%%%%%%%%
\item[(iii)] For $X$ in the Weyl chamber $
\sigma_\alpha\sigma_\beta \overline{C^+}
= \sigma_\beta \sigma_\gamma \overline{C^+}=\sigma_\gamma\sigma_\alpha\overline{C^+}=C_{231}$ 

$$
E_k(X, \lambda)
\asymp 
\frac{e^{\langle \lambda,X^+\rangle}}{
 (1+ \al\apx)^{k+1}(1+ \bl\bpx)^{k+1}(1+ \gl\gpx)^{k}} \quad {\rm if\  \al\le\bl}
$$

$$
E_k(X, \lambda)
\asymp \frac{e^{\langle \lambda,X^+\rangle}}{
 (1+ \al\apx)^{k}(1+ \bl\bpx)^{k+1}(1+ \gl\gpx)^{k+1}} \quad {\rm if\ }  \al\ge\bl,
 \al\apx\ge \bl\bpx
$$

$$
E_k(X, \lambda)
\asymp \frac{e^{\langle \lambda,X^+\rangle}}{
 (1+ \al\apx)^{k+1}(1+ \bl\bpx)^{k}(1+ \gl\gpx)^{k+1}} \quad {\rm if\ }  \al\ge\bl,
 \al\apx\le \bl\bpx.
$$

\item[(iv)] for
$X$ in the Weyl chamber $
\sigma_\beta \sigma_\alpha \overline{C^+}
= \sigma_\gamma   \sigma_\beta\overline{C^+}=\sigma_\alpha \sigma_\gamma\overline{C^+}
=C_{312}$

$$E_k(X, \lambda)
\asymp 
\frac{e^{\langle \lambda,X^+\rangle}}{
 (1+ \al\apx)^{k+1}(1+ \bl\bpx)^{k+1}(1+ \gl\gpx)^{k}} \quad {\rm if\ \bl\le\al}
$$

$$
E_k(X, \lambda)
\asymp \frac{e^{\langle \lambda,X^+\rangle}}{
 (1+ \al\apx)^{k}(1+ \bl\bpx)^{k+1}(1+ \gl\gpx)^{k+1}} \quad {\rm if\ }  \bl\ge\al,
 \al\apx\ge \bl\bpx
$$

$$
E_k(X, \lambda)
\asymp \frac{e^{\langle \lambda,X^+\rangle}}{
 (1+ \al\apx)^{k+1}(1+ \bl\bpx)^{k}(1+ \gl\gpx)^{k+1}} \quad {\rm if\ }  \bl\ge\al,
 \al\apx\le \bl\bpx.
$$

\item[(v)] For $X$  in the Weyl chamber $\sigma_\gamma \overline{C^+}=
\sigma_\alpha \sigma_\beta  \sigma_\alpha \overline{C^+} =\sigma_\beta \sigma_\alpha \sigma_\beta \overline{C^+} = C_{321},
 $
$$
E_k(X, \lambda)\asymp \frac{e^{\langle \lambda,X^+\rangle}}{
 (1+ \al\apx)^{k}(1+ \bl\bpx)^{k}(1+ \gl\gpx)^{k+1}}
$$
\end{itemize}
\end{theorem}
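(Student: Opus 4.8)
The plan is to feed the sharp rank‑one Dunkl estimate into the positive integral representations \eqref{Amri1}--\eqref{Amri2} and then, chamber by chamber, to evaluate the resulting two‑dimensional Laplace‑type integral.

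\smallskip
\noindent\textit{Ingredients and reductions.} First I would record the sharp rank‑one bound, obtained from \eqref{rank1int} by a standard Laplace‑method computation at the relevant endpoint $z=\pm1$ (it is also in \cite{AnkerDT}):
\[
E_k^{\text{rk\,1}}(x,v)\asymp\frac{e^{|xv|}}{(1+|xv|)^{k}}\ \text{ if }\ xv\ge0,\qquad
E_k^{\text{rk\,1}}(x,v)\asymp\frac{e^{|xv|}}{(1+|xv|)^{k+1}}\ \text{ if }\ xv<0.
\]
Next I would use the opposition involution $\tau(x_1,x_2,x_3)=(-x_3,-x_2,-x_1)$: it normalizes the root system $A_2$, fixes $\overline{C^+}$ and the (constant) multiplicity, exchanges the roots $\alpha$ and $\beta$, satisfies $E_k(\tau X,\tau\lambda)=E_k(X,\lambda)$, maps $C_{213}\leftrightarrow C_{132}$ and $C_{231}\leftrightarrow C_{312}$, and carries the three sub‑cases of (iii) onto the three sub‑cases of (iv). Hence it is enough to establish the estimate for $X$ in $C_{123}$, $C_{213}$, $C_{231}$, $C_{321}$. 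Throughout I would exploit that \emph{every} factor occurring in \eqref{Amri1}--\eqref{Amri2} is nonnegative on the domain of integration (the Remark after Theorem~\ref{A2A1}): an upper bound is then obtained by dominating the integrand term by term, and a lower bound by restricting the integral to a suitable sub‑rectangle and keeping only the dominant summand --- there is no cancellation to control.

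\smallskip
\noindent\textit{Reduction to a model integral.} For the chamber at hand I would use \eqref{Amri1} for $C_{123},C_{213}$ and \eqref{Amri2} for $C_{231},C_{321}$, so that the sign of the rank‑one argument is aligned with the geometry. After inserting the rank‑one estimate, the two rank‑one kernels contribute the common exponential growth $e^{|\alpha_X|(\nu_1-\nu_2)/2}$ (resp.\ with $\beta_X$) together with the power factors $(1+\tfrac12|\alpha_X|(\nu_1-\nu_2))^{-k}$ and $(1+\tfrac12|\alpha_X|(\nu_1-\nu_2))^{-k-1}$; combining with the explicit factor $e^{(x_1+x_2-2x_3)(\nu_1+\nu_2)/2}$, a short linear computation shows that in each of these four chambers the total exponential is $e^{L(\nu)}$ with $L$ affine, $\max_{\nu}L=\langle\lambda,X^+\rangle$ attained at the vertex $(\lambda_1,\lambda_2)$ of the rectangle $[\lambda_2,\lambda_1]\times[\lambda_3,\lambda_2]$. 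Passing to the displacement coordinates $s=\lambda_1-\nu_1\in[0,\alpha_\lambda]$, $t=\lambda_2-\nu_2\in[0,\beta_\lambda]$ (so that $\nu_1-\nu_2=\alpha_\lambda-s+t$), using $V(\lambda)=\alpha_\lambda\beta_\lambda\gamma_\lambda$, and checking that in all four chambers $s$ is multiplied by $\gamma_{X^+}$, $t$ by $\beta_{X^+}$, and the coupled argument by $\alpha_{X^+}$, the theorem reduces to showing that, for a $k$‑dependent constant $c_k$,
\[
\frac{c_k\,e^{\langle\lambda,X^+\rangle}}{(\alpha_\lambda\beta_\lambda\gamma_\lambda)^{2k}}\iint_{[0,\alpha_\lambda]\times[0,\beta_\lambda]}e^{-\gamma_{X^+}s-\beta_{X^+}t}\;P(s,t)\;\Big(1+\tfrac12\alpha_{X^+}(\alpha_\lambda-s+t)\Big)^{-m}\,ds\,dt
\]
is comparable to the right‑hand side of the relevant item, where $P(s,t)$ is a fixed product of powers of $s,\,t,\,(\alpha_\lambda-s),\,(\gamma_\lambda-s),\,(\alpha_\lambda+t),\,(\beta_\lambda-t)$, the two summands of \eqref{Amri1}/\eqref{Amri2} differing only in which of these powers equals $k$ rather than $k-1$ and in whether $m=k$ or $m=k+1$.

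\smallskip
\noindent\textit{Evaluating the model integral; the main obstacle.} The engine is the one‑dimensional estimate
\[
\int_0^a e^{-px}\,x^{\mu}(a-x)^{\nu}\,dx\;\asymp\;\frac{a^{\mu+\nu+1}}{(1+ap)^{\mu+1}}\qquad(p\ge0,\ \mu,\nu>-1),
\]
proved by splitting the range at $ap\asymp1$. If the coupled factor $\big(1+\tfrac12\alpha_{X^+}(\alpha_\lambda-s+t)\big)^{-m}$ could simply be frozen and the integrand factored over $s$ and $t$, two applications of this estimate would close the argument. The difficulty --- and, I expect, the only delicate point in the whole proof --- is that the coupled factor genuinely mixes $s$ and $t$ and that the slowly varying factors $(\gamma_\lambda-s)$ and $(\alpha_\lambda+t)$ range over intervals of the \emph{other} variable's scale; both features matter precisely when $\alpha_\lambda$ and $\beta_\lambda$ are of very different sizes, which is also exactly when the ``effective size'' of the coupled argument over the region carrying the mass moves away from $\alpha_{X^+}\alpha_\lambda$ (e.g.\ to $\alpha_{X^+}\gamma_\lambda$, or to $\alpha_{X^+}/\beta_{X^+}$). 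I would therefore split according to (a) which of $\alpha_\lambda,\beta_\lambda$ is the larger and (b) the sizes of $\alpha_\lambda\alpha_{X^+}$, $\beta_\lambda\beta_{X^+}$, $\gamma_\lambda\gamma_{X^+}$ relative to $1$ and to one another; condition (b) decides which summand of \eqref{Amri1}/\eqref{Amri2} dominates, hence whether the surviving power of the coupled factor is $k$ or $k+1$. In each of the finitely many regimes the coupled factor is pinched, in $s$ and $t$, between constant multiples of a power of one of $1+\alpha_\lambda\alpha_{X^+}$, $1+\beta_\lambda\beta_{X^+}$, $1+\gamma_\lambda\gamma_{X^+}$; the residual integral factors, the one‑dimensional estimate applies, and after multiplying by $c_k(\alpha_\lambda\beta_\lambda\gamma_\lambda)^{-2k}$ the three exponents fall into place. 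For $C_{123}$ and $C_{213}$ all regimes collapse to one formula, giving items (i) and (ii) (the dominant summand being the one with the factor $(\alpha_\lambda-s)(\alpha_\lambda+t)$, whose correction power is $k$ in $C_{123}$ and $k+1$ in $C_{213}$); for $C_{321}$ the two summands carry the same powers, giving item (v); and for $C_{231}$ the competition in (b) is real and yields exactly the three alternatives of item (iii), with switching loci $\alpha_\lambda=\beta_\lambda$ and $\alpha_\lambda\alpha_{X^+}=\beta_\lambda\beta_{X^+}$. The bulk of the work is the bookkeeping in this last step: verifying that over all regimes the powers of $1+\alpha_\lambda\alpha_{X^+}$, $1+\beta_\lambda\beta_{X^+}$, $1+\gamma_\lambda\gamma_{X^+}$ assemble into the piecewise formulas of (iii)--(iv), and that no case distinction finer than those displayed is needed.
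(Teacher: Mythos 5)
Your plan follows the paper's proof essentially step for step: insert the sharp rank-one estimate into the positive formulas \eqref{Amri1}--\eqref{Amri2}, factor out $e^{\langle\lambda,X^+\rangle}$, reduce to the four chambers $C_{123},C_{213},C_{231},C_{321}$ via the involution $(x_1,x_2,x_3)\mapsto(-x_3,-x_2,-x_1)$, and estimate the resulting positive two-dimensional integral by one-dimensional Beta-type estimates with a case split on $\al$ versus $\bl$ and on $\al\apx$ versus $\bl\bpx$. The only point where your sketch is more optimistic than the actual execution is the hard case of $C_{231}$ (namely $\bl\ge\al$, $\bpx\le\apx$), where the coupled factor cannot simply be pinched to a constant on each regime but must be carried inside the $\nu_2$-integral, producing integrals of the form $U(p)$ whose estimation requires further sub-cases in $k$ and in the sizes of $-\ax\al$ and $-\gx\bl$, including logarithmic terms that must be shown to be dominated --- this is, however, the same tedious bookkeeping that the paper itself only outlines.
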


 Theorem \ref{sharp}
 and the estimates of $E_k(X,\lambda)$ obtained for the system $A_1$ in \cite{AnkerDT} support
 the next conjecture. 

 Each Weyl chamber $C$ is the image  of $C^+$ by an element $w$ of the Weyl group, i.e. $wC^+=C$.  Each element $w\in W$ decomposes (often non-uniquely)  as a composition of symetries
\begin{equation}\label{w}   
w=\sigma_{\alpha_1}\ldots \sigma_{\alpha_s} 
\end{equation}
We then call the sequence $(\sigma_{\alpha_1},...,\sigma_{\alpha_s})$ a realization of $C$.  Let ${\mathcal Short}(C)\subset W$  be the set of the shortest realizations of $C$, i.e. the number $s$ of symetries in the decomposition 
 \eqref{w} is minimised on elements of ${\mathcal Short}(C)\subset W$.
For example, for the system $A_2$, we have 
 ${\mathcal Short}(C231)=\{ (\sigma_\alpha,\sigma_\beta),  (\sigma_\beta,\sigma_\gamma),
 (\sigma_\gamma, \sigma_\alpha)\}$.
\begin{conjecture}
 For the root system
 $A_n$ the following estimate of the Dunkl kernel holds.

 Let $C$ a Weyl chamber. 
 %\tilde C=\sigma_{\alpha_1}, \dots, \sigma_{\alpha_s} C^+ $
%where  $w=\sigma_{\alpha_1}...\sigma_{\alpha_s} \in W $ is the shortest possible in order to obtain the
%Weyl chamber $\tilde C$
%as an image of $\overline{C^+}$ by an element of the Weyl group $W$.
For all  $\lambda \in C^+$ and  $X\in C$ there exists $S\in Short(C)$ such that
$$E_k(X, \lambda)\asymp e^{\langle \lambda,X^+\rangle}
F(X,\lambda)
$$
and 
%$w=\sigma_{\alpha_1}...\sigma_{\alpha_s} \in W $  such that $X=wX^+$ and
$$
F(X,\lambda)=
\frac{1}{
\prod_{\alpha>0}  (1+ \al\apx)^{p(\alpha)}},
$$
where $p(\alpha)=k+1 $ if
$\sigma_\alpha\in S$ and $p(\alpha)=k$ otherwise.

In particular, for $X\in C^+$,
$$F(X,\lambda)=
\frac{1}{\prod_{\alpha>0}  (1+ \al\ax)^{k}}
$$
and, for $X\in \sigma_{\beta} C^+$,
$$F(X,\lambda)=
\frac{1}{
 (1+ \bl\bx)^{k+1}
\prod_{\alpha\in\Sigma^+\setminus\{\beta\}}  (1+ \al\ax)^{k}}.
$$
\end{conjecture}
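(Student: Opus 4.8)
The plan is to argue by induction on the rank $n$, using as the driving engine a recursive \emph{all-positive} integral formula of the type \eqref{Amri1}--\eqref{Amri2}, whose existence for general $A_n$ is precisely the content of the Remark following Theorem \ref{A2A1}. The base case $n=1$ is the sharp rank-one estimate of \cite{AnkerDT}, and the case $n=2$ is exactly Theorem \ref{sharp}. For the inductive step I would first establish, generalizing Theorem \ref{A2A1} through Sawyer's recursion \cite{Sawyer}, a formula that writes $E_k(X,\lambda)$ on $A_n$ as an integral over the Gelfand--Tsetlin interlacing polytope $\{\nu\colon\ \lambda_{i+1}\le \nu_i\le \lambda_i\}$ of a \emph{sum of positive terms}, each term being a lower-rank Dunkl kernel $E_k^{A_{n-1}}$ evaluated at some reordering of the reduced variables, multiplied by the weight $W_k(\lambda,\nu)$ (an explicit product of $(k-1)$-th powers of the differences $\lambda_i-\nu_j$) and by an exponential $e^{c\sum_i \nu_i}$ in the remaining direction, with $c$ depending linearly on $X$.

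Second, I would insert the induction hypothesis for the $A_{n-1}$ kernels appearing inside, replacing each by its $\asymp$-equivalent $e^{\langle\cdot,\cdot^+\rangle}F(\cdot,\cdot)$. This reduces the problem to a purely analytic two-sided estimate of an explicit multidimensional integral with a \emph{positive} integrand: an exponential factor, the interlacing weight $W_k$, and inductively-known polynomial factors. As in \cite{PGPS0}, I would evaluate this integral by a Laplace/saddle-point analysis: the exponential forces $\nu$ to concentrate at the vertex of the interlacing polytope dictated by the chamber of $X$, the value of the total linear exponent at that vertex is exactly $\langle\lambda,X^+\rangle$ (a short computation already visible in the $A_2$ case), and the order of vanishing of $W_k$ near that vertex produces precisely the product $\prod_{\alpha>0}(1+\al\apx)^{-p(\alpha)}$. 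The vertex selected by the dominant term is what singles out a shortest realization $S\in \mathrm{Short}(C)$ and assigns the bumped exponent $p(\alpha)=k+1$ to the reflections $\sigma_\alpha$ occurring in $S$; this is why the statement quantifies existentially over $S$, different size-regimes of the products $\al\apx$ selecting different minimal factorizations. The three sub-cases of Theorem \ref{sharp}(iii)--(iv), matching the three elements of $\mathrm{Short}(C231)$, are the $A_2$ shadow of exactly this phenomenon.

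The main obstacle is twofold. The first and most serious is the recursive all-positive formula itself: Theorem \ref{A2A1} was obtained by a specific Bessel-function manipulation of Amri's $A_2$ formula \cite{Amri01}, and promoting it to all $A_n$ demands a genuinely new identity compatible with Sawyer's recursion \cite{Sawyer}; without positivity of the integrand the sharp two-sided estimates collapse, since cancellation destroys the matching of upper and lower bounds. The second obstacle is the combinatorial bookkeeping in the Laplace analysis: one must show, uniformly over \emph{all} chambers $C$ and \emph{all} relative-size regimes of the products $\al\apx$, that a single interlacing vertex dominates and that the induced exponent pattern coincides with $p(\alpha)$ read off from $\mathrm{Short}(C)$. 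I expect the clean organizing principle to be that the reflection length $(n+1)-c(w)$ of the element $w$ with $wC^+=C$ equals the number of roots receiving the extra power, and that the minimal transitive factorizations of $w$ into transpositions are exactly the realizations $S$ that can arise as dominant vertices. This would reduce the analytic heart of the argument to verifying the two anchor cases stated in the conjecture ($X\in C^+$ and $X\in\sigma_\beta C^+$) and propagating them through the interlacing recursion.
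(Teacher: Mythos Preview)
The statement you are attempting to prove is labeled \emph{Conjecture} in the paper, not Theorem; the paper offers no proof of it whatsoever. It is presented only as a hypothesis ``supported'' by Theorem \ref{sharp} (the $A_2$ case) and by \cite{AnkerDT} (the $A_1$ case). There is therefore no ``paper's own proof'' to compare your proposal against.

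As for the proposal itself, you correctly identify that it is not a proof but a research program, and you name the decisive gap yourself: the inductive engine is a recursive all-positive integral formula for $E_k$ on $A_n$ in terms of $E_k$ on $A_{n-1}$, and no such formula is known. The paper's Remark after Theorem \ref{A2A1} merely \emph{conjectures} that \eqref{Amri1}--\eqref{Amri2} generalize; it does not supply even a candidate identity for $n\ge 3$. Sawyer's recursion \cite{Sawyer} concerns the $W$-invariant kernel $E_k^W$, not $E_k$, and the passage from Amri's $A_2$ formula to the positive form \eqref{Amri1} used a specific Bessel manipulation with no obvious higher-rank analogue. Until that identity is established, everything downstream (the Laplace analysis, the vertex selection, the combinatorics of $\mathrm{Short}(C)$) is conditional. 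So your plan is a plausible outline of how one might attack the conjecture, but it does not constitute a proof, and it cannot be compared to a proof the paper does not contain.
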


\begin{remark}
The kernel $E_k(X,\lambda)$ is analytic and therefore continuous.  This must be reflected in the estimates given in  Theorem \ref{sharp},  in particular in the rational portion of the estimate as the projection $X\mapsto X^+$ is continuous.

When $X$ traverses from $C^+$ to $\sigma_\alpha C^+$, $\alpha_{X^+}=0$.  This explains how the term $1+\al\alpha_{X^+}$ can  continuously change from exponent $k$ to $k+1$.  The same reasoning applies when $X$ traverses from $C^+$ to $\sigma_\beta C^+$.  

 When $X$ traverses from $\sigma_\alpha C^+$ (where $x_2>x_1>x_3$) to $\sigma_\beta \sigma_\alpha C^+$ (where $x_2>x_3>x_1$), $\beta_{X^+}=0$.  This explains how the term $1+\bl\beta_{X^+}$ can continuously change from exponent $k$ to $k+1$.  

 The same Weyl chamber 
 may be represented in two different ways, as  $\sigma_\gamma   \sigma_\beta\overline{C^+}=\sigma_\alpha \sigma_\gamma\overline{C^+}$ and
 the same reasoning applies 
 to explain the continuous change and appearance of powers $k+1$ of the two  other  terms.

 When $X$ traverses from $\sigma_\beta\sigma_\alpha C^+$ (where $x_2>x_3>x_1$) to $\sigma_\alpha\sigma_\beta\sigma_\alpha  C^+$ (where $x_3>x_2>x_1$), $\alpha_{X^+}=0$ and therefore $\gamma_{X^+}=\beta_{X^+}$.  This explains how the term $1+\al\alpha_{X^+}$ can continuously change from exponent $k+1$ to $k$ and the terms $1+\bl\beta_{X^+}$ and $1+\gl\gamma_{X^+}$ can 
 continuously ``exchange'' exponents.
 \end{remark} 

 \begin{corollary}\label{heat}
Let $X\in C^+$, $Y\in\a$.  We have
\begin{align*}
p_t(X,Y)\asymp \frac{t^{-4+k_\alpha+k_\beta+k_\gamma}\,e^{-|X-Y^+|^2}/(4t)}{
(t+\alpha_{X}\alpha_{Y^+})^{k_\alpha}
(t+\beta_{X}\beta_{Y^+})^{k_\beta}
(t+\gamma_{X}\gamma_{Y^+})^{k_\gamma}
}
\end{align*}
where $w\in W$ is such that $Y=w Y^+$ and
\begin{align*}
(k_\alpha,k_\beta,k_\gamma)=\left\lbrace
\begin{array}{cl}
(k,k,k)&\text{if $w=id$}\\
(k+1,k,k)&\text{if $w=\sigma_\alpha$}\\
(k,k+1,k)&\text{if $w=\sigma_\beta$}\\
(k,k,k+1)&\text{if $w=\sigma_\gamma$}\\
(k+1,k+1,k)
\ {\rm or}\   (k+1,k,k+1 )
\ {\rm or}  \  (k,k+1,k+1 )
&\text{if $w=\sigma_\alpha\sigma_\beta$ or $w=\sigma_\beta
\sigma_\alpha$}
\end{array}
\right.
\end{align*}
When $w=\sigma_\alpha\sigma_\beta$ or $w=\sigma_\beta
\sigma_\alpha$,
the conditions for the appearance of one of three possible values of
$(k_\alpha,k_\beta,k_\gamma)$ are given in Theorem \ref{sharp}.
\end{corollary}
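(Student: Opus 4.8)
The plan is to deduce Corollary \ref{heat} from Theorem \ref{sharp} by means of the standard integral link between the Dunkl heat kernel and the Dunkl kernel $E_k$. Recall the formula (this is formula \eqref{heatR} alluded to in the introduction):
\begin{align*}
p_t(X,Y)=\frac{1}{c\, t^{\gamma_0+d/2}}\,e^{-(|X|^2+|Y|^2)/(4t)}\,E_k\!\left(\frac{X}{2t},Y\right),
\end{align*}
where $d=2$ for $A_2$ and $\gamma_0=k_\alpha+k_\beta+k_\gamma=3k$ is the index sum. First I would substitute the estimates of Theorem \ref{sharp} for $E_k(X/(2t),Y)$, using that $\langle X/(2t),Y^+\rangle=\langle X,Y^+\rangle/(2t)$ and that $|X|^2+|Y|^2-2\langle X,Y^+\rangle=|X-Y^+|^2$, so the product of the two Gaussian factors and the exponential factor of $E_k$ combines into $e^{-|X-Y^+|^2/(4t)}$. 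This is the same computation one makes to pass from the spherical function estimate to the heat kernel estimate in the $W$-invariant theory, so it is routine.

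The next step is to rescale the rational part. In Theorem \ref{sharp} the denominator is a product of factors $(1+\mu_\lambda\, \mu_{X^+})^{p(\mu)}$ with $\mu\in\{\alpha,\beta,\gamma\}$; here the roles of $X$ and $\lambda$ are played by $Y$ (in the chamber) and $X/(2t)$ (with $X\in C^+$). Thus $\mu_\lambda$ becomes $\mu_X/(2t)$ and $\mu_{X^+}$ becomes $\mu_{Y^+}$, giving $1+\mu_X\mu_{Y^+}/(2t)=(2t+\mu_X\mu_{Y^+})/(2t)$. Collecting the powers of $t$ from the prefactor $t^{-\gamma_0-d/2}=t^{-3k-1}$ and from the $p(\mu)$ copies of $(2t)$ in the denominators — there are $p(\alpha)+p(\beta)+p(\gamma)=3k+(\#\{\mu:p(\mu)=k+1\})$ of them — yields an overall power $t^{-3k-1+3k+m}=t^{m-1}$ where $m$ is the number of roots carrying exponent $k+1$, i.e. $m=0,1,2$ according to the length of $w$. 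Writing $k_\mu:=p(\mu)$ this is exactly $t^{-4+k_\alpha+k_\beta+k_\gamma}$ as claimed, since $k_\alpha+k_\beta+k_\gamma=3k+m$. All constants $2$ and $c$ are absorbed into the $\asymp$.

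Finally I would match the chamber bookkeeping. The element $w$ with $Y=wY^+$ sends $C^+$ to the chamber containing $Y$, and the three cases $w=\sigma_\alpha,\sigma_\beta,\sigma_\gamma$ correspond exactly to parts (ii) and (v) of Theorem \ref{sharp}, while $w=\sigma_\alpha\sigma_\beta$ and $w=\sigma_\beta\sigma_\alpha$ correspond to parts (iii) and (iv); in those two cases the three sub-cases of Theorem \ref{sharp}, expressed in terms of comparisons of $\alpha_\lambda,\beta_\lambda$ and of $\alpha_\lambda\alpha_{X^+},\beta_\lambda\beta_{X^+}$, translate into comparisons of $\alpha_X,\beta_X$ and of $\alpha_X\alpha_{Y^+},\beta_X\beta_{Y^+}$ (the factors of $2t$ cancel in the inequalities), which is why the statement simply refers back to Theorem \ref{sharp}. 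The case $w=\sigma_\gamma$ uses the identity $\sigma_\gamma\overline{C^+}=C_{321}$ from part (v), and one checks $p(\gamma)=k+1$, $p(\alpha)=p(\beta)=k$, matching the $(k,k,k+1)$ row. I do not expect a genuine obstacle here: the only thing to be careful about is the exact value of the exponent of $t$, i.e. making sure the counting of the $2t$ denominators against the $t^{-3k-1}$ prefactor is done correctly, and that one uses $\gamma_0=3k$ (not $3k/2$) in the heat-kernel normalization; the chamber-by-chamber translation of the sub-case inequalities is then immediate by homogeneity.
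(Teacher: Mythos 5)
Your route is exactly the paper's: the published proof consists of quoting the heat kernel formula \eqref{heatR} (R\"osler's Lemma 4.5) and substituting the estimates of Theorem \ref{sharp}, and everything you spell out --- combining the Gaussian factors with $e^{\langle X,Y^+\rangle/(2t)}$ into $e^{-|X-Y^+|^2/(4t)}$, rescaling each factor $1+\mu_X\mu_{Y^+}/(2t)$ into $(2t+\mu_X\mu_{Y^+})/(2t)$, and translating the sub-case inequalities of parts (iii)--(iv) by homogeneity in $1/(2t)$ --- is precisely the bookkeeping the authors leave implicit. Working with $E_k(X/(2t),Y)$ rather than $E_k(X,Y/(2t))$ is harmless by the symmetry and homogeneity of $E_k$, and your chamber-by-chamber matching of $w$ with the parts of Theorem \ref{sharp} is correct.

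The one step that does not hold up is the final identification of the power of $t$. Your own (correct) count gives
\begin{equation*}
t^{-\frac d2-\gamma}\cdot t^{\,k_\alpha+k_\beta+k_\gamma}
=t^{-\frac d2-3k+k_\alpha+k_\beta+k_\gamma}=t^{\,m-1}\qquad(d=2,\ \gamma=\textstyle\sum_{\alpha>0}k(\alpha)=3k,\ m=k_\alpha+k_\beta+k_\gamma-3k),
\end{equation*}
but you then assert that this ``is exactly $t^{-4+k_\alpha+k_\beta+k_\gamma}$.'' Since $-4+k_\alpha+k_\beta+k_\gamma=3k+m-4$, the two exponents coincide only when $k=1$ (and no choice of $d$ repairs this, since $\tfrac d2+3k=4$ cannot hold for all $k$). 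So the sentence ``$t^{m-1}=t^{-4+k_\alpha+k_\beta+k_\gamma}$'' is false in general: either your derivation must stop at the exponent $-\tfrac d2-\gamma+k_\alpha+k_\beta+k_\gamma$ it actually produces, or --- more likely --- the exponent printed in the Corollary is itself a slip and should read $t^{-\frac d2-\gamma+k_\alpha+k_\beta+k_\gamma}$. Either way, the reconciliation you claim at the end does not follow from your computation; the rest of the argument is sound and matches the paper's.
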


\begin{proof}
The Dunkl heat kernel $p_t(X,Y)$ is given as
\begin{align} 
p_t(X,Y)
&=\frac1{2^{\gamma+d/2} c_k}\,t^{-\frac{d}{2}-\gamma}\,e^{\frac{-|X|^2-|Y|^2}{4t}}\,E_k\left(X,\frac{Y}{2t}\right),\label{heatR}
\end{align}
where $\gamma= \sum_{\alpha>0} k(\alpha)$.  It suffices then to use our estimates for $E_k(X,\lambda)$ (\cite[Lemma 4.5]{Roesler}).

\end{proof}

\section{Proof of the sharp estimate}
In this Section we 
give the proof of Theorem \ref{sharp}. 

\subsection{Substitution of the estimates of $E_k^{rk\,1}$ in the integral formulas for  $E_k$}

In the beginning of the proof, in the formulas from Theorem \ref{A2A1}, we replace the Dunkl kernels on $A_1$
by their estimates provided in \cite{AnkerDT}:
\begin{equation}\label{est:A1}
E_k^{\text{rk\,1}}( x,y) \asymp \frac{e^{|xy|}}{(1+|xy|)^{k+p}} \quad 
\ \text{where $p=0$ if $xy\ge 0$ and $p=1$ if $xy\le 0$.}
 \end{equation}
Let $J(X,\lambda)=V(\lambda)^{2k}\,E_k(X,\lambda)$.
Estimating $E_k(X,\lambda)$ is equivalent to estimating of $J(X,\lambda)$.

\begin{lemma} \label{J1alpha}

If $x_1\geq x_2$, we have
\begin{align}
J(X,\lambda)
&\asymp\int_{\lambda_3}^{\lambda_2} \int_{\lambda_2}^{\lambda_1} 
\frac{\left(\nu_{1}-\nu_{2}\right) (\al+\alpha_{X}\left(\nu_{1}-\lambda_{2}\right) \left(\lambda_{1}-\nu_{2}\right))}{(1+\alpha_X(\nu_1-\nu_2))^{k+1}}
\nonumber
\\
\label{alpha+}
 &\qquad\qquad\qquad
\,(\nu_1-\lambda_3) (\nu_2-\lambda_3)
e^{(x_{1}-x_{3}) \nu_{1} +(x_{2}-x_{3}) \nu_{2}}
\,W_k(\lambda,\nu),d\nu_1d\nu_2.
\end{align}

 If $x_2\geq x_1$, we have
\begin{align}
J(X,\lambda)&\asymp
\int_{\lambda_3}^{\lambda_2} \int_{\lambda_2}^{\lambda_1} 
\frac{\left(\nu_{1}-\nu_{2}\right) \left(\alpha_{\lambda}-\alpha_{X} \left(\lambda_{1}-\nu_{1}\right) \left(\lambda_{2}-\nu_{2}\right)\right)}{(1-\alpha_X(\nu_1-\nu_2))^{k+1}}
\nonumber
\\
\label{alpha-}
&\qquad\qquad\qquad
\,(\nu_1-\lambda_3) (\nu_2-\lambda_3)
e^{(x_{2}- x_{3}) \nu_{1}+( x_{1}-x_{3}) \nu_{2}}
\,W_k(\lambda,\nu),d\nu_1d\nu_2.
\end{align}

\end{lemma}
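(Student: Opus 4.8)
The plan is to start from the integral formula \eqref{Amri1} of Theorem \ref{A2A1}, which expresses $J(X,\lambda)=V(\lambda)^{2k}E_k(X,\lambda)$ as a double integral over $\nu_1\in[\lambda_2,\lambda_1]$, $\nu_2\in[\lambda_3,\lambda_2]$ whose integrand is the product of an explicit positive polynomial factor, the exponential $e^{(x_1+x_2-2x_3)(\nu_1+\nu_2)/2}$, the weight $W_k(\lambda,\nu)$, and the bracket $\{\dots\}$ containing two copies of the rank-one Dunkl kernel with arguments $\pm(x_1-x_2)/2$ and $(\nu_1-\nu_2)$. I would first rewrite the exponential $e^{(x_1+x_2-2x_3)(\nu_1+\nu_2)/2}$ using $x_1+x_2-2x_3=(x_1-x_3)+(x_2-x_3)$ so that the product with any surviving exponential from the rank-one kernels reorganizes cleanly into $e^{(x_1-x_3)\nu_1+(x_2-x_3)\nu_2}$ or the variant with $\nu_1,\nu_2$ swapped in the non-symmetric part — this is just bookkeeping, and the natural way to see it is to carry the argument $(\nu_1-\nu_2)$ of the rank-one kernels, which is nonnegative since $\nu_1\ge\lambda_2\ge\nu_2$.

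Next, assuming $x_1\ge x_2$ (the first case), I would substitute the sharp rank-one estimate \eqref{est:A1} into the two terms of the bracket. For the first term the argument of $E_k^{\text{rk\,1}}$ is $\big((x_1-x_2)/2,(\nu_1-\nu_2)\big)$ with $x_1-x_2=\alpha_X\ge 0$, so the product $xv=\alpha_X(\nu_1-\nu_2)/2\ge 0$, giving the factor $e^{\alpha_X(\nu_1-\nu_2)/2}/(1+\alpha_X(\nu_1-\nu_2))^{k}$; for the second term the argument is $\big(-(x_1-x_2)/2,(\nu_1-\nu_2)\big)$ with $xv\le 0$, giving $e^{\alpha_X(\nu_1-\nu_2)/2}/(1+\alpha_X(\nu_1-\nu_2))^{k+1}$ (note the modulus $|xy|$ makes the exponential the same in both). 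Pulling out the common exponential $e^{\alpha_X(\nu_1-\nu_2)/2}$ and combining it with the $e^{(x_1+x_2-2x_3)(\nu_1+\nu_2)/2}$ yields exactly the exponential $e^{(x_1-x_3)\nu_1+(x_2-x_3)\nu_2}$ in \eqref{alpha+}. It remains to show the polynomial-plus-rational part of the bracket is $\asymp$ the stated quantity $(\nu_1-\nu_2)\big(\alpha_\lambda+\alpha_X(\nu_1-\lambda_2)(\lambda_1-\nu_2)\big)/(1+\alpha_X(\nu_1-\nu_2))^{k+1}$: the bracket after substitution is, up to the common exponential, $(\lambda_1-\lambda_2)(\nu_1-\nu_2)\big[(\nu_1-\lambda_2)(\lambda_1-\nu_2)(1+\alpha_X(\nu_1-\nu_2))^{-k}+(\lambda_1-\nu_1)(\lambda_2-\nu_2)(1+\alpha_X(\nu_1-\nu_2))^{-k-1}\big]$, and one checks that with $\alpha_\lambda=\lambda_1-\lambda_2$ the numerator $\alpha_\lambda(\nu_1-\lambda_2)(\lambda_1-\nu_2)(1+\alpha_X(\nu_1-\nu_2))+\alpha_\lambda(\lambda_1-\nu_1)(\lambda_2-\nu_2)$ is comparable to $(\nu_1-\nu_2)\big(\alpha_\lambda+\alpha_X(\nu_1-\lambda_2)(\lambda_1-\nu_2)\big)$ times a constant — here one uses $\lambda_1-\nu_1\le\lambda_1-\lambda_2$, $\lambda_2-\nu_2\le\lambda_2-\lambda_3$, $(\nu_1-\lambda_2)+(\lambda_2-\nu_2)=\nu_1-\nu_2$, and the elementary fact that for nonnegative quantities $1+a+b\asymp(1+a)(1+b)$ is false in general but $(1+a)\asymp 1+a$ suffices, together with $\alpha_\lambda(\nu_1-\lambda_2)(\lambda_1-\nu_2)\le\alpha_\lambda\cdot\alpha_\lambda\cdot\alpha_\lambda$ type bounds to absorb the extra $(1+\alpha_X(\nu_1-\nu_2))$ factor against the $(\nu_1-\lambda_2)(\lambda_1-\nu_2)$ term. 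I would organize this comparison by splitting into the regimes $\alpha_X(\nu_1-\nu_2)\le 1$ and $\alpha_X(\nu_1-\nu_2)\ge 1$ where it becomes transparent.

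For the second case $x_2\ge x_1$, I would run the same computation but now $x_1-x_2=-\alpha_X\le 0$, so the roles of the two rank-one terms swap: the term with argument $(x_1-x_2)/2<0$ carries the $(k+1)$-power denominator and the other the $k$-power; the common exponential pulled out is $e^{-\alpha_X(\nu_1-\nu_2)/2}=e^{\alpha_X(\nu_2-\nu_1)/2}$, which when combined with $e^{(x_1+x_2-2x_3)(\nu_1+\nu_2)/2}$ produces $e^{(x_2-x_3)\nu_1+(x_1-x_3)\nu_2}$, matching \eqref{alpha-}; and the algebraic rearrangement of the numerator now produces $\alpha_\lambda-\alpha_X(\lambda_1-\nu_1)(\lambda_2-\nu_2)$ — note this is still positive because $\alpha_X=x_2-x_1$ but the natural bracket manipulation flips the sign of the correction term, and one must verify positivity, which follows since the true integrand $E_k$ is positive and the approximating integrand is $\asymp$ it, or directly from $\alpha_X(\lambda_1-\nu_1)(\lambda_2-\nu_2)\le\alpha_X(\nu_1-\nu_2)$-type control once one tracks that $\{\dots\}$ before estimation already had this structure. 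The main obstacle I anticipate is precisely this last algebraic comparison: showing that the two-term bracket (a polynomial times a convex-like combination of $(1+\alpha_X(\nu_1-\nu_2))^{-k}$ and $(1+\alpha_X(\nu_1-\nu_2))^{-k-1}$) is two-sided comparable, uniformly in $X$ and $\lambda$ and over the whole integration rectangle, to the single clean expression in the statement — this requires care because the comparison constant must not degenerate as $\nu$ approaches the corners of the rectangle or as $\alpha_X(\nu_1-\nu_2)$ ranges over $[0,\infty)$, and one has to make sure the factor $(\nu_1-\lambda_2)(\lambda_1-\nu_2)$ can legitimately absorb an extra power of $(1+\alpha_X(\nu_1-\nu_2))$ on the large-argument side, which is where the bound $(\nu_1-\lambda_2)(\lambda_1-\nu_2)\le\alpha_\lambda^2$ and $(\nu_1-\nu_2)\le\alpha_\lambda+\beta_\lambda$ enter.
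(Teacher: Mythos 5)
Your overall route coincides with the paper's: substitute the sharp rank--one estimate \eqref{est:A1} into the two terms of the bracket in \eqref{Amri1}, pull out the common exponential $e^{\pm\alpha_X(\nu_1-\nu_2)/2}$ and recombine it with $e^{(x_1+x_2-2x_3)(\nu_1+\nu_2)/2}$. Your assignment of the powers $k$ and $k+1$ to the two terms and your exponential bookkeeping are both correct. The problem is the last step, which you yourself flag as the main obstacle. There is first a concrete error: the bracket after substitution is
\begin{align*}
(\nu_1-\lambda_2)(\lambda_1-\nu_2)\,(1+\alpha_X(\nu_1-\nu_2))^{-k}+(\lambda_1-\nu_1)(\lambda_2-\nu_2)\,(1+\alpha_X(\nu_1-\nu_2))^{-k-1}
\end{align*}
with \emph{no} prefactor, whereas you insert an extra factor $(\lambda_1-\lambda_2)(\nu_1-\nu_2)$. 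Consequently the numerator you propose to compare, $\alpha_\lambda\bigl[(\nu_1-\lambda_2)(\lambda_1-\nu_2)(1+\alpha_X(\nu_1-\nu_2))+(\lambda_1-\nu_1)(\lambda_2-\nu_2)\bigr]$, differs from the target $(\nu_1-\nu_2)\bigl(\alpha_\lambda+\alpha_X(\nu_1-\lambda_2)(\lambda_1-\nu_2)\bigr)$ by the unbounded factor $\alpha_\lambda$, so the comparison as you state it is false.

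More importantly, the entire regime-splitting apparatus you anticipate (absorbing an extra power of $1+\alpha_X(\nu_1-\nu_2)$, bounds of the type $(\nu_1-\lambda_2)(\lambda_1-\nu_2)\le\alpha_\lambda^2$, worries about constants degenerating near the corners of the rectangle) is unnecessary, because the recombination is an exact polynomial identity rather than an estimate. Since
\begin{align*}
(\nu_1-\lambda_2)(\lambda_1-\nu_2)+(\lambda_1-\nu_1)(\lambda_2-\nu_2)=(\lambda_1-\lambda_2)(\nu_1-\nu_2),
\end{align*}
putting the two terms over the common denominator $(1+\alpha_X(\nu_1-\nu_2))^{k+1}$ gives the numerator
\begin{align*}
(\nu_1-\lambda_2)(\lambda_1-\nu_2)\bigl(1+\alpha_X(\nu_1-\nu_2)\bigr)+(\lambda_1-\nu_1)(\lambda_2-\nu_2)
=(\nu_1-\nu_2)\bigl(\alpha_\lambda+\alpha_X(\nu_1-\lambda_2)(\lambda_1-\nu_2)\bigr)
\end{align*}
exactly, which is \eqref{alpha+}; in the case $x_2\ge x_1$ the roles of the two terms are exchanged and the same identity yields $(\nu_1-\nu_2)\bigl(\alpha_\lambda-\alpha_X(\lambda_1-\nu_1)(\lambda_2-\nu_2)\bigr)$, i.e.\ \eqref{alpha-}, whose positivity is then immediate because $\alpha_X\le 0$ there. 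Once you delete the spurious prefactor and invoke this identity, your argument closes without any case analysis.
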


\begin{proof}

If $x_2\geq x_1$, we have
\begin{align*}
J(X,\lambda)&\asymp\int_{\lambda_3}^{\lambda_2} \int_{\lambda_2}^{\lambda_1} 
\{
(\nu_1-\lambda_2)(\lambda_1-\nu_2)\,E_k^{\text{rk\,1}}((x_1-x_2)/2,(\nu_1-\nu_2))
\\ &\qquad\qquad\qquad
+(\lambda_1-\nu_1)(\lambda_2-\nu_2)\,E_k^{\text{rk\,1}}(-(x_1-x_2)/2,(\nu_1-\nu_2))
\}
\\ &\qquad\qquad\qquad
\,(\nu_1-\lambda_3) (\nu_2-\lambda_3)
e^{(x_1+x_2-2x_3)(\nu_1+\nu_2)/2}
\,W_k(\lambda,\nu),d\nu_1d\nu_2.\\
&\asymp\int_{\lambda_3}^{\lambda_2} \int_{\lambda_2}^{\lambda_1} 
\{
(\nu_1-\lambda_2)(\lambda_1-\nu_2)\,\frac{e^{(x_2-x_1)(\nu_1-\nu_2)/2}}{(1-\alpha_X(\nu_1-\nu_2))^{k+1}}
\\ &\qquad\qquad\qquad
+(\lambda_1-\nu_1)(\lambda_2-\nu_2)\,\frac{e^{(x_2-x_1)(\nu_1-\nu_2)/2}}{(1-\alpha_X(\nu_1-\nu_2))^{k}}
\}
\\ &\qquad\qquad\qquad
\,(\nu_1-\lambda_3) (\nu_2-\lambda_3)
e^{(x_1+x_2-2x_3)(\nu_1+\nu_2)/2}
\,W_k(\lambda,\nu),d\nu_1d\nu_2\\
&=\int_{\lambda_3}^{\lambda_2} \int_{\lambda_2}^{\lambda_1} 
\frac{(\nu_1-\lambda_2)(\lambda_1-\nu_2)
+(\lambda_1-\nu_1)(\lambda_2-\nu_2)\,(1-\alpha_X(\nu_1-\nu_2))}{(1-\alpha_X(\nu_1-\nu_2))^{k+1}}
\\ &\qquad\qquad\qquad
\,(\nu_1-\lambda_3) (\nu_2-\lambda_3)
e^{x_{2} \nu_{1}-\nu_{1} x_{3}+\nu_{2} x_{1}-x_{3} \nu_{2}}
\,W_k(\lambda,\nu),d\nu_1d\nu_2\\
&=\int_{\lambda_3}^{\lambda_2} \int_{\lambda_2}^{\lambda_1} 
\frac{\left(\nu_{1}-\nu_{2}\right) \left(\alpha_{\lambda}-\alpha_{X} \left(\lambda_{1}-\nu_{1}\right) \left(\lambda_{2}-\nu_{2}\right)\right)}{(1-\alpha_X(\nu_1-\nu_2))^{k+1}}
\\ &\qquad\qquad\qquad
\,(\nu_1-\lambda_3) (\nu_2-\lambda_3)
e^{x_{2} \nu_{1}-\nu_{1} x_{3}+\nu_{2} x_{1}-x_{3} \nu_{2}}
\,W_k(\lambda,\nu),d\nu_1d\nu_2.
\end{align*}

If $x_1\geq x_2$, we have
\begin{align*}
J(X,\lambda)
&\asymp\int_{\lambda_3}^{\lambda_2} \int_{\lambda_2}^{\lambda_1} 
\frac{\left(\nu_{1}-\nu_{2}\right) (\lambda_{1}-\lambda_{2}+\alpha_{X}\left(\nu_{1}-\lambda_{2}\right) \left(\lambda_{1}-\nu_{2}\right))}{(1+\alpha_X(\nu_1-\nu_2))^{k+1}}
\\ &\qquad\qquad\qquad
\,(\nu_1-\lambda_3) (\nu_2-\lambda_3)
e^{x_{1} \nu_{1}-\nu_{1} x_{3}+x_{2} \nu_{2}-x_{3} \nu_{2}}
\,W_k(\lambda,\nu),d\nu_1d\nu_2.
\end{align*}
    
\end{proof}

Interchanging the roles of the roots $\alpha$ and $\beta$, we find

\begin{lemma}\label{J1beta}
    
If $x_2\geq x_3$, we have
\begin{align}
J(X,\lambda)
&\asymp \int_{\lambda_3}^{\lambda_2} \int_{\lambda_2}^{\lambda_1} 
\frac{\left(\nu_{1}-\nu_{2}\right) \left(\bl+\beta_X \left(\nu_{1}-\lambda_{3}\right) \left(\lambda_{2}-\nu_{2}\right)\right)}{(1+\beta_X(\nu_1-\nu_2))^{k+1}}
\nonumber
\\
\label{beta+}
 &\qquad\qquad\qquad
\,(\lambda_1-\nu_1) (\lambda_1-\nu_2)
e^{\nu_1(x_2-x_1)+\nu_2(x_3-x_1)}
\,W_k(\lambda,\nu),d\nu_1d\nu_2.
\end{align}

If $x_3\geq x_2$, we have
\begin{align}
J(X,\lambda)
&\asymp \int_{\lambda_3}^{\lambda_2} \int_{\lambda_2}^{\lambda_1} 
\frac{\left(\nu_{1}-\nu_{2}\right) \left(\bl-\beta_X \left(\nu_{1}-\lambda_{2}\right) \left(\nu_{2}-\lambda_{3}\right)\right)}{(1-\beta_X(\nu_1-\nu_2))^{k+1}}
\nonumber
\\
\label{beta-}
 &\qquad\qquad\qquad
\,(\lambda_1-\nu_1) (\lambda_1-\nu_2)
e^{\nu_1(x_3-x_1)+\nu_2(x_2-x_1)}
\,W_k(\lambda,\nu),d\nu_1d\nu_2.
\end{align}

\end{lemma}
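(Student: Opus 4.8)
The plan is to run, almost word for word, the computation from the proof of Lemma \ref{J1alpha}, but starting from the second formula \eqref{Amri2} of Theorem \ref{A2A1} rather than the first. Writing $J(X,\lambda)=V(\lambda)^{2k}E_k(X,\lambda)$ cancels the prefactor $V(\lambda)^{2k}$, and the surviving constant $3\,\Gamma(3k)/\Gamma(k)^2$ is irrelevant for an $\asymp$ estimate. On the domain of integration one has $\lambda_3\le\nu_2\le\lambda_2\le\nu_1\le\lambda_1$, so every factor occurring in \eqref{Amri2} apart from the two rank-$1$ kernels, namely $\nu_1-\nu_2$, $\nu_1-\lambda_2$, $\nu_2-\lambda_3$, $\nu_1-\lambda_3$, $\lambda_2-\nu_2$, $\lambda_1-\nu_1$, $\lambda_1-\nu_2$ and $W_k(\lambda,\nu)$, is nonnegative; the integrand is therefore a sum of two nonnegative terms, and one may insert the two-sided rank-$1$ estimate \eqref{est:A1} into each of them separately.

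I would then split according to the sign of $\beta_X=x_2-x_3$. The two rank-$1$ kernels appearing in \eqref{Amri2} are $E_k^{\text{rk\,1}}(\pm(x_2-x_3)/2,\nu_1-\nu_2)$; since $\nu_1-\nu_2\ge0$ their argument-products have opposite signs, so in \eqref{est:A1} exactly one of them acquires the exponent $k$ and the other the exponent $k+1$ (when $x_2\ge x_3$ it is the kernel multiplied by $(\nu_1-\lambda_3)(\lambda_2-\nu_2)$ that gets exponent $k$; when $x_3\ge x_2$ it is the one multiplied by $(\nu_1-\lambda_2)(\nu_2-\lambda_3)$), the factor $\tfrac12$ in the argument being harmlessly absorbed into the $\asymp$ constant. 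Collecting the exponential factors, $e^{|(x_2-x_3)(\nu_1-\nu_2)/2|}$ combines with $e^{(x_2+x_3-2x_1)(\nu_1+\nu_2)/2}$ to give $e^{\nu_1(x_2-x_1)+\nu_2(x_3-x_1)}$ when $x_2\ge x_3$ and $e^{\nu_1(x_3-x_1)+\nu_2(x_2-x_1)}$ when $x_3\ge x_2$, which are precisely the exponentials in \eqref{beta+} and \eqref{beta-}.

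The last step is purely algebraic: bringing the bracketed sum to the common denominator $(1\pm\beta_X(\nu_1-\nu_2))^{k+1}$, the numerator becomes $(\nu_1-\lambda_3)(\lambda_2-\nu_2)+(\nu_1-\lambda_2)(\nu_2-\lambda_3)$ plus, respectively minus, a multiple of $\beta_X(\nu_1-\nu_2)$. One then uses the elementary identity
\[
(\nu_1-\lambda_3)(\lambda_2-\nu_2)+(\nu_1-\lambda_2)(\nu_2-\lambda_3)=(\lambda_2-\lambda_3)(\nu_1-\nu_2)=\bl\,(\nu_1-\nu_2),
\]
which is the image under $\alpha\leftrightarrow\beta$ of the identity $(\nu_1-\lambda_2)(\lambda_1-\nu_2)+(\lambda_1-\nu_1)(\lambda_2-\nu_2)=\al\,(\nu_1-\nu_2)$ implicitly used in the proof of Lemma \ref{J1alpha}. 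Factoring out $\nu_1-\nu_2$ then produces the numerator $(\nu_1-\nu_2)(\bl+\beta_X(\nu_1-\lambda_3)(\lambda_2-\nu_2))$ of \eqref{beta+} and the numerator $(\nu_1-\nu_2)(\bl-\beta_X(\nu_1-\lambda_2)(\nu_2-\lambda_3))$ of \eqref{beta-}.

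There is no real obstacle here: this is the same chain of elementary manipulations already performed for the $\alpha$-formula, and indeed, since \eqref{Amri2} is nothing but \eqref{Amri1} with the roles of $\alpha$ and $\beta$ interchanged, Lemma \ref{J1beta} follows formally from Lemma \ref{J1alpha} under the same interchange. The only points that require a little attention are the bookkeeping of which rank-$1$ kernel receives the exponent $k+1$ in each sign regime and checking that the recombined exponential has exactly the stated linear form.
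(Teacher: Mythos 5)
Your proposal is correct and follows essentially the same route as the paper: the paper proves Lemma \ref{J1beta} simply by interchanging the roles of $\alpha$ and $\beta$ in the explicit computation done for Lemma \ref{J1alpha}, which is exactly the parallel computation (starting from \eqref{Amri2}, inserting \eqref{est:A1}, and using the identity $(\nu_1-\lambda_3)(\lambda_2-\nu_2)+(\nu_1-\lambda_2)(\nu_2-\lambda_3)=\bl(\nu_1-\nu_2)$) that you carry out and then also summarize as a symmetry argument. Your bookkeeping of which kernel receives exponent $k$ versus $k+1$ and the recombination of the exponentials both check out against \eqref{beta+} and \eqref{beta-}.
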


\subsection{Reduction to estimating  $I(X,\lambda)=    e^{-\langle \lambda, X^+\rangle}
J(X,\lambda)$ \label{sec:I}
}

Now, in the formulas from Lemmas \ref{J1alpha} and \ref{J1beta}, on each Weyl chamber, we will 
put in evidence the exponential factor $e^{\langle \lambda, X^+\rangle}$.
Estimating $E_k(X,\lambda)$ is equivalent to
 estimating the integral $I(X,\lambda)$.

\subsubsection{Formulas \eqref{alpha+} and  \eqref{alpha-}} Let
$Q_\alpha^+=e^{\nu_1(x_1-x_3)+\nu_2(x_2-x_3)}$  and $Q_\alpha^-=e^{\nu_{1}(x_2-x_3)+\nu_2(x_1-x_3)}$.
We have 
\begin{enumerate}

\item For $X \in  \overline{C^+} = C_{123}$: $Q_\alpha^+=e^{\langle\lambda, X^+\rangle}  \,e^{-\left(x_{1}-x_{3}\right) \left(\lambda_{1}-\nu_{1}\right)-\left(x_{2}-x_{3}\right) \left(\lambda_{2}-\nu_{2}\right)}$.

\item For $X \in \sigma_\gamma \overline{C^+} = C_{321}$: $Q_\alpha^-=e^{\langle\lambda, X^+\rangle}  \,e^{-\left(x_{3}-x_{2}\right) \left(\nu_{1}-\lambda_{2}\right)-\left(x_{3}-x_{1}\right) \left(\nu_{2}-\lambda_{3}\right)}$.
\item For $X \in \sigma_\beta\sigma_\alpha \overline{C^+} = C_{231}$: $Q_\alpha^-=e^{\langle\lambda, X^+\rangle} \,e^{-\left(x_{2}-x_{3}\right) \left(\lambda_{1}-\nu_{1}\right)-\left(x_{3}-x_{1}\right) \left(\nu_{2}-\lambda_{3}\right)}$.

\item For $X \in \sigma_\alpha \sigma_\beta\overline{C^+} = C_{312}$: $Q_\alpha^+=e^{\langle\lambda, X^+\rangle}  \,e^{-\left(x_{3}-x_{1}\right) \left(\nu_{1}-\lambda_{2}\right)-\left(x_{3}-x_{2}\right) \left(\nu_{2}-\lambda_{3}\right)}
$.

\item
For $X \in \sigma_\alpha \overline{C^+} = C_{213}$:
$Q_\alpha^-=e^{\langle\lambda, X^+\rangle} \,e^{-\left(x_{2}-x_{3}\right) \left(\lambda_{1}-\nu_{1}\right)-\left(x_{1}-x_{3}\right) \left(\lambda_{2}-\nu_{2}\right)}
$.

\item For $X \in \sigma_\beta \overline{C^+} = C_{132}$: $Q_\alpha^+=e^{\langle\lambda, X^+\rangle}  \,e^{-\left(x_{1}-x_{3}\right) \left(\lambda_{1}-\nu_{1}\right)-\left(x_{3}-x_{2}\right) \left(\nu_{2}-\lambda_{3}\right)}
$.
\end{enumerate}

\subsubsection{Formulas \eqref{beta+} and  \eqref{beta-}} Let  $Q_\beta^+=e^{\nu_1(x_2-x_1)+\nu_2(x_3-x_1)}$
and  $Q_\beta^-=e^{\nu_1(x_3-x_1)+\nu_2(x_2-x_1)}$. We have 
\begin{enumerate}
\item For $X \in\overline{C^+} = C_{123}$: $Q_\beta^+=e^{\langle\lambda, X^+\rangle}\,e^{-\left(x_{1}-x_{2}\right) \left(\nu_{1}-\lambda_{2}\right)-\left(x_{1}-x_{3}\right) \left(\nu_{2}-\lambda_{3}\right)}
$.
\item For $X \in \sigma_\gamma \overline{C^+} = C_{321}$: $Q_\beta^-=e^{\langle\lambda, X^+\rangle} \,e^{-\left(x_{3}-x_{1}\right) \left(\lambda_{1}-\nu_{1}\right)-\left(x_{2}-x_{1}\right) \left(\lambda_{2}-\nu_{2}\right)}
$.

\item For $X \in \sigma_\beta\sigma_\alpha overline{C^+} = C_{231}$: $Q_\beta^+=e^{\langle\lambda, X^+\rangle} \,e^{-\left(x_{2}-x_{1}\right) \left(\lambda_{1}-\nu_{1}\right)-\left(x_{3}-x_{1}\right) \left(\lambda_{2}-\nu_{2}\right)}
$.

\item For $X \in \sigma_\alpha \sigma_\beta\overline{C^+} = C_{312}$: $Q_\beta^-=e^{\langle\lambda, X^+\rangle} \,e^{-\left(x_{3}-x_{1}\right) \left(\lambda_{1}-\nu_{1}\right)-\left(x_{1}-x_{2}\right) \left(\nu_{2}-\lambda_{3}\right)}
$.
\item For $X \in \sigma_\alpha \overline{C^+} = C_{213}$: $Q_\beta^+=e^{\langle\lambda, X^+\rangle} \,e^{-\left(x_{2}-x_{1}\right) \left(\lambda_{1}-\nu_{1}\right)-\left(x_{1}-x_{3}\right) \left(\nu_{2}-\lambda_{3}\right)}$.

\item For $X \in \sigma_\beta \overline{C^+} = C_{132}$: $Q_\beta^-=e^{\langle\lambda, X^+\rangle}\,e^{-\left(x_{1}-x_{3}\right) \left(\nu_{1}-\lambda_{2}\right)-\left(x_{1}-x_{2}\right) \left(\nu_{2}-\lambda_{3}\right)}
$.
\end{enumerate}

\subsection{Reductions using symmetries between the Weyl chambers}

Consider the following relations: 
\begin{align*}
E((x_1,x_2,x_3),(\lambda_1,\lambda_2,\lambda_3))
&=E((-x_1,-x_2,-x_3),(-\lambda_1,-\lambda_2,-\lambda_3))\\
&=E((-x_3,-x_2,-x_1),(-\lambda_3,-\lambda_2,-\lambda_1))
\end{align*}
with $\tilde{\lambda}= (-\lambda_3,-\lambda_2,-\lambda_1) \in\a^+$ since $\lambda\in\a^+$.  Write $\tilde{X}=(-x_3,-x_2,-x_1)$. 
This ``symmetry'' sends $X\in C_{123}$ to  $\tilde{X}\in C_{123}$, $X\in C_{213}$ to  $\tilde{X}\in C_{132}$, $X\in C_{231}$ to  $\tilde{X}\in C_{312}$ and $X\in C_{321}$ to  $\tilde{X}\in C_{321}$.  Note also that $\alpha_{\tilde{\lambda}}=\beta_\lambda$ and  $\beta_{\tilde{\lambda}}=\alpha_\lambda$.  Thus, we reduce finding the estimates of $E_k$ to the four Weyl chambers: $C_{123}$, $C_{213}$, $C_{231}$ and $C_{321}$.

There are several other symmetries (such as exchanging the role between $\alpha$ and $\beta$ or 
commuting $X$ and $\lambda$) but they do not give rise to any other reduction.

%\sout{Recall that} Another symmetry is a conclusion of the equation $E(X,\lambda)=E(\lambda,X)$.  If, for example, $X\in\sigma_\alpha\sigma_\beta C^+$ (with $x_2>x_3>x_1$) then $E(X,\lambda)=E(\lambda,X)=E((\lambda_2,\lambda_3,\lambda_1),(x_2,x_3,x_1))$ with $(x_2,x_3,x_1)\in C^+$ playing the role of $\lambda$ and $(\lambda_2,\lambda_3,\lambda_1)\in \sigma_\beta\sigma_\alpha C^+$ playing the role of $X$ in our estimates.

%Thus, the estimates on  the Weyl chamber  
%$\sigma_\alpha\sigma_\beta C^+=C231$  and 
%the estimates on  the Weyl chamber  
%$\sigma_\beta\sigma_\alpha C^+=C312$ are obtained one from another, by changing the role of $X$ and $\lambda.$

%\ps{\sout{In the same way the cases $C213$ and $C132$ are symmetric.}}

%There are sever other symmetries (exchanging the role between $\alpha$ and $\beta$, using multiplication by -1)
%but they do not give rise to any other reduction, i.e. they send $C231$  to $C312$ and $C213$ to $C132$.

\subsection{ Plan of the proof for a fixed Weyl chamber}

Fix $X$ in a Weyl chamber.  The proof will be done separately for $\al\le \bl$ and for $\bl\le\al$.

{\bf Step 1.}
Choose a ``starting'' formula for $I(X,\lambda)$ either with $\alpha_X$ (i.e.  using $J$ expressed by the formula 
 \eqref{alpha+} or \eqref{alpha-})  or with $\beta_X$ (i.e.   using $J$ expressed by the formula  formula 
  \eqref{beta+} or  \eqref{beta-}).   A choice allowing the next Step 2 always exists. This depends on the form of the exponentials $Q_\alpha^\pm$ or $Q_\beta^\pm$ which is crucial in this choice. 

 {\bf Step 2.}   In the double integral for $I(X,\lambda)$ chosen in Step 1, choose a half-subintegral $I_1$ such that
 it may be proven that $I_1\gtrsim I_2=I-I_1$.

 {\bf Step 3.}  Find a sharp estimate of $I_1$. It will be a sharp estimate of $I$.

\subsection{Positive Weyl chamber}

Suppose $\al\geq\bl$. We choose as  the starting  formula the formula \eqref{alpha+} with $\alpha_X$.
Using the corresponding formula for $Q_\alpha^+$  of Section \ref{sec:I},  we have
\begin{align*}
I(X,\lambda) 
&\asymp\int_{\lambda_3}^{\lambda_2} \int_{\lambda_2}^{\lambda_1} 
\frac{\left(\nu_{1}-\nu_{2}\right) (\al+\alpha_{X}\left(\nu_{1}-\lambda_{2}\right) \left(\lambda_{1}-\nu_{2}\right))}{(1+\alpha_X(\nu_1-\nu_2))^{k+1}}
\\
 &\qquad\qquad\qquad
\,(\nu_1-\lambda_3) (\nu_2-\lambda_3)
e^{-\left(x_{1}-x_{3}\right) \left(\lambda_{1}-\nu_{1}\right)-\left(x_{2}-x_{3}\right) \left(\lambda_{2}-\nu_{2}\right)}
\,W_k(\lambda,\nu),d\nu_1d\nu_2.
\end{align*}

We set $I_1=\int_{\lambda_3}^{\lambda_2}\,\int_{M_1}^{\lambda_1}[\dots]$ and $I_2=I-I_1$. Denote by $\tilde I_i$, $i=1,2$, the integral in the variable $\nu_1$. We have
\begin{align*}
\tilde{I}_1\geq e^{-\alpha_\lambda \gx/3}
\int_{(2\,\lambda_1+\lambda_2)/3}^{(3\,\lambda_1+\lambda_2)/4} \ldots d\nu_1
\gtrsim   \frac{\al^{1+1+k+2(k-1)+1}}{(1+\alpha_X\al)^{k+1}}\,(1+\alpha_X\al)e^{-\alpha_\lambda\gx/3}=\frac{\al^{3k+1}}{(1+\alpha_X\al)^{k}}e^{-\alpha_\lambda\gx/3}
\end{align*}
while, using  $\lambda_1-\nu_2\lesssim \al $
and $\nu_1-\lambda_2\le \nu_1-\nu_2 $
\begin{align*}
\tilde{I}_2&\lesssim \al^{k-1}e^{-\alpha_\lambda\gx/2} \int_{\lambda_2}^{M_1}\,(\nu_1-\nu_2)\frac{\al+\alpha_X(\nu_1-\lambda_2)(\lambda_1-\nu_2)}{(1+\alpha_X(\nu_1-\nu_2))^{k+1}}\,(\nu_1-\lambda_2)^{k-1}\,(\nu_1-\lambda_3)^k\,d\nu_1\\
&\leq\al^{k-1+1+k} e^{-\alpha_\lambda\gx/2} \int_{\lambda_2}^{M_1}\,\frac{(\nu_1-\nu_2)(\nu_1-\lambda_2)^{k-1}}{(1+\alpha_X(\nu_1-\nu_2))^{k}}\,d\nu_1.
\end{align*}

Now, if $k\leq1$, since $\nu_1\mapsto \frac{\nu_1-\nu_2}{(1+\alpha_X(\nu_1-\nu_2))^{k}}$ is increasing, we have
\begin{align} \label{est:I2less1}
\int_{\lambda_2}^{M_1}\,\frac{(\nu_1-\nu_2)(\nu_1-\lambda_2)^{k-1}}{(1+\alpha_X(\nu_1-\nu_2))^{k}}\,d\nu_1
\lesssim \frac{\al}{(1+\alpha_X\al)^{k}}
\int_{\lambda_2}^{M_1}\,(\nu_1-\lambda_2)^{k-1}d\nu_1 \asymp \frac{\al^{k+1}}{(1+\alpha_X\al)^{k}}.
\end{align}

If $k\geq1$, since 
$(\nu_1-\lambda_2)^{k-1}\le
(\nu_1-\nu_2)^{k-1}$ and
$\nu_1\mapsto \frac{\nu_1-\nu_2}{1+\alpha_X(\nu_1-\nu_2)}$ is increasing, we have
\begin{align}\label{est:I2more1}
\int_{\lambda_2}^{M_1}\,\frac{(\nu_1-\nu_2)(\nu_1-\lambda_2)^{k-1}}{(1+\alpha_X(\nu_1-\nu_2))^{k}}\,d\nu_1
\leq \frac{\al^k}{(1+\alpha_X\al)^{k}}
\int_{\lambda_2}^{M_1}\,d\nu_1 \asymp \frac{\al^{k+1}}{(1+\alpha_X\al)^{k}}.
\end{align}

We will use the same device (separating $k\leq 1$ and $k\geq 1$) in \eqref{K2},  \eqref{K3} and in \eqref{K1} without repeating the details.  In both cases, $\tilde{I}_2\lesssim \tilde{I}_1$.

Finally, 
\begin{align*}
I_1 &\asymp \frac{\al^{1+1+k+2(k-1)}}{(1+\alpha_X\al)^{k}}
\int_{M_1}^{\lambda_1} e^{-\gx\,(\lambda_1-\nu_1)}\,(\lambda_1-\nu_1)^{k-1}\,d\nu_1
\cdot
\int_{\lambda_3}^{\lambda_2} e^{-{ \bx}\,(\lambda_2-\nu_2)}\,(\lambda_2-\nu_2)^{k-1}(\nu_2-\lambda_3)^{k-1}\,d\nu_1\\
&\asymp \frac{\al^{3k}}{(1+\alpha_X\al)^{k}} \frac{\al^{k}}{(1+\gx\al)^{k}}  \frac{\bl^{2k}}{(1+\bx\bl)^{k}}
\end{align*}

In  the first integral we make the change of variable $u=\gx(\lambda_1-\nu_1)$  and we use
\begin{align}\label{est:rank0}
    \int_0^x e^{-u} u^{k-1}du \asymp x^k(1+x)^{-k}.
\end{align}
We obtain 
$$
\int_{M_1}^{\lambda_1} e^{-\gx\,(\lambda_1-\nu_1)}\,(\lambda_1-\nu_1)^{k-1}\,d\nu_1\asymp
\frac{\al^k}{(1+\gx\al)^k}.$$

In the second integral, we change $v=\lambda_2-\nu_2$ and next $v=\bl t$. We obtain
$$
\bl^{2k} \int_0^1 e^{-\bx\bl t} 
t^{k-1} (1-t)^k dt$$
We use the integral representation \eqref{rank1int} of the function $E^{\rm rk\,1}(\bx/2, \bl)$ and we apply the estimate \eqref{est:A1} which is the desired result since $\al\asymp\al+\bl=
 \gl$.

Note that if we assume $\bl\geq \al$, a similar proof holds using  the starting  formula
the formula \eqref{beta+} with $\beta_X$ and we set
$I_1=\int_{\lambda_3}^{M_2}\,\int_{\lambda_2}^{\lambda_1}[\dots]$.

\subsection{Weyl chamber $C_{321}$}

Suppose $\bl\geq\al$.
We choose as  the starting  formula
the formula \eqref{alpha-} with $\alpha_X$.
By Section \ref{sec:I}, 
the exponent
$Q_\alpha^-=e^{\langle\lambda, X^+\rangle}  \,e^{\bx \left(\nu_{1}-\lambda_{2}\right)+\gx \left(\nu_{2}-\lambda_{3}\right)}$ 
and the starting formula
is
\begin{align*}
I(X,\lambda)&\asymp
\int_{\lambda_3}^{\lambda_2} \int_{\lambda_2}^{\lambda_1} 
\frac{\left(\nu_{1}-\nu_{2}\right) \left(\alpha_{\lambda}-\alpha_{X} \left(\lambda_{1}-\nu_{1}\right) \left(\lambda_{2}-\nu_{2}\right)\right)}{(1-\alpha_X(\nu_1-\nu_2))^{k+1}}
\nonumber
\\
&\qquad\qquad\qquad
\,(\nu_1-\lambda_3) (\nu_2-\lambda_3)
e^{\bx \left(\nu_{1}-\lambda_{2}\right)+\gx \left(\nu_{2}-\lambda_{3}\right)}
\,W_k(\lambda,\nu),d\nu_1d\nu_2.
\end{align*}

We set $I_1=\int_{\lambda_3}^{M_2}\,\int_{\lambda_2}^{\lambda_1}[\dots]$ and $I_2=I-I_1$. 
Denote by $\tilde I_i$, $i=1,2$, the integrals in the variable $\nu_2$.

We have,
\begin{align*}
\tilde{I}_1&\geq \int_{(3\,\lambda_3+\lambda_2)/4}^{(2\,\lambda_3+\lambda_2)/3}
\ldots d\nu_2
\gtrsim  e^{\gx\bl/3} \frac{\bl^{1+k+2(k-1)+1}}{(1-\alpha_X\bl)^{k+1}}\,(\al-\alpha_X\bl(\lambda_1-\nu_1))\\
&\gtrsim  e^{-(x_3-x_1)\bl/3} \frac{\bl^{3k}}{(1-\alpha_X\bl)^{k+1}}\,(\al-\alpha_X\bl(\lambda_1-\nu_1))
\end{align*}
while
\begin{align}
\tilde{I}_2&\lesssim e^{-(x_3-x_1)\bl/2} \bl^{k} \,\int_{M_2}^{\lambda_2}\,\frac{(\nu_1-\nu_2)(\al-\alpha_X(\lambda_1-\nu_1)(\lambda_2-\nu_2))
(\lambda_1-\nu_2)^{k-1}(\lambda_2-\nu_2)^{k-1}}{(1-\alpha_X(\nu_1-\nu_2))^{k+1}}\,d\nu_2\nonumber\\
&\leq e^{-(x_3-x_1)\bl/2} \al \bl^{k} \,\int_{M_2}^{\lambda_2}\,\frac{(\nu_1-\nu_2)
(\lambda_1-\nu_2)^{k-1}(\lambda_2-\nu_2)^{k-1}}{(1-\alpha_X(\nu_1-\nu_2))^{k}}\,d\nu_2
\lesssim \frac{e^{-(x_3-x_1)\bl/2} \al \bl^{3k}}{(1-\alpha_X\bl)^{k}}.\label{K2}
\end{align}

We now proceed similarly as in \eqref{est:I2less1}
and \eqref{est:I2more1}.
The case $k\ge 1$ is similar to $C^+$.
For $k\le 1$, we use 
 $$
 \int_{M_2}^{\lambda_2}
 (\lambda_1-\nu_2)^{k-1}
 (\lambda_2-\nu_2)^{k-1}d\nu_2
= \int_{0}^{\al/2} u^{k-1}(\al-u)^{k-1}\,du
=\al^{2k-1} \int_{0}^{1/2} v^{k-1}(1-v)^{k-1}
\,dv.
 $$

Now, since $x_3-x_1\geq -\alpha_X=x_2-x_1$, we have
\begin{align*}
\frac{\tilde{I}_1}{\tilde{I}_2}&\gtrsim  \frac{e^{(x_3-x_1)\bl/6}\,(\al-\alpha_X(\lambda_1-\nu_1))}{\al(1-\alpha_X\bl)}
\gtrsim  \frac{1+(x_3-x_1)\bl/6}{1-\alpha_X\bl}\geq 1/6.
\end{align*}

Now, estimating $I$ boils down to estimating the following three integrals:

\begin{align*}
I\asymp I_1&\asymp\frac{\bl^{3k-1}}{(1-\alpha_X \bl)^{k+1}}
\int_{\lambda_3}^{M_2}\,\int_{\lambda_2}^{\lambda_1}\,e^{-(x_3-x_2)(\nu_1-\lambda_2)-(x_3-x_1)(\nu_2-\lambda_3)}
(\al-\alpha_X(\lambda_1-\nu_1)\bl)
\\&\qquad\qquad
(\nu_2-\lambda_3)^k(\nu_1-\lambda_2)^{k-1}(\lambda_1-\nu_1)^{k-1}d\nu_1 d\nu_2\\
\\
&=\frac{\bl^{3k-1}}{(1-\alpha_X \bl)^{k+1}}
\left[ \int_{\lambda_3}^{M_2}\,e^{-(x_3-x_1)(\nu_2-\lambda_3)} (\nu_2-\lambda_3)^k d\nu_2\right]
\\&\qquad\qquad
\left[\al
\int_{\lambda_2}^{\lambda_1}\,e^{-(x_3-x_2)(\nu_1-\lambda_2)}
(\nu_1-\lambda_2)^{k-1}(\lambda_1-\nu_1)^{k-1}d\nu_1
\right.\\&\qquad\qquad\qquad\left.
-\alpha_X \bl \int_{\lambda_2}^{\lambda_1}\,e^{-(x_3-x_2)(\nu_1-\lambda_2)}
(\nu_1-\lambda_2)^{k-1}(\lambda_1-\nu_1)^{k}d\nu_1
\right]\\
\end{align*}

We estimate the first integral using the estimate \eqref{est:rank0}.
In the second integral, up to a natural change of variables, we recognize the integral formula for the modified Bessel function $\mathcal{J}_{k-1/2}$. Recall that $\mathcal{J}_{k-1/2}(vx) = E_k^{W, rk\,1} (v,x)$.
By the estimates of the function $\mathcal{J}_{k}$  (we can also use of  the estimates of the Dunkl kernel
in the $W$-invariant rank 1 case   \cite{PGPS0}), we have
 \begin{align}\label{Wrk1}
 \int_{\lambda_2}^{\lambda_1}e^{-(x_3-x_2)(\nu_1-\lambda_2)}
 (\lambda_1-\nu_1)^{k-1}(\nu_1-\lambda_2)^{k-1}
 d\nu_1
 \asymp \frac{\al^{2k-1}}{(1+(x_3-x_2)\al)^k}.
\end{align}
The third integral, similarly as in $C^+$, is estimated by the formula \eqref{est:A1} in the general rank 1 case. Finally, we obtain
\begin{align*}
I&\asymp\frac{\bl^{3k-1}}{(1-\alpha_X \bl)^{k+1}}
\frac{\bl^{k+1}}{(1+(x_3-x_1)\bl)^{k+1}}
\left[ \frac{\al \al^{2k-1}}{(1+(x_3-x_2)\al)^{k}}
-\frac{\alpha_X\bl \al^{2k}}{(1+(x_3-x_2)\al)^{k}}
\right]\\
&=\frac{\al^{2k}\bl^{4k}(1-\ax\bl)}{(1-\alpha_X \bl)^{k+1}(1+(x_3-x_2)\al)^{k}(1+(x_3-x_1)\bl)^{k+1}}.
\end{align*}

Note that if we assume $\al\geq \bl$, a similar proof holds using the {formula \eqref{beta-}} with
$I_1=\int_{\lambda_3}^{\lambda_2}
\int_{M_1}^{\lambda_1}$ and $I_2=I-I_1$.

\subsection{Weyl chamber $\sigma_\beta \sigma_\alpha \overline{C^+}$ (i.e.
$x_2\ge x_3 \ge x_1$)}

In the case $\al\ge \bl$, we use the {formula \eqref{beta+}} from Lemma \ref{J1beta}.

Discussions with Anker and Trojan were helpful in determining the right bounds in this chamber in the chamber $C_{312}$.

Let
\begin{align*}
I_1(X,\lambda)
&\asymp \int_{\lambda_3}^{\lambda_2} \int_{M_1}^{\lambda_1} e^{-\left(x_{2}-x_{1}\right) \left(\lambda_{1}-\nu_{1}\right)-\left(x_{3}-x_{1}\right) \left(\lambda_{2}-\nu_{2}\right)}
\frac{\left(\nu_{1}-\nu_{2}\right) \left(\bl+\beta_X \left(\nu_{1}-\lambda_{3}\right) \left(\lambda_{2}-\nu_{2}\right)\right)}{(1+\beta_X(\nu_1-\nu_2))^{k+1}}
\\
 &\qquad\qquad\qquad
\,(\lambda_1-\nu_1) (\lambda_1-\nu_2)
\,W_k(\lambda,\nu)\,d\nu_1d\nu_2
\end{align*}
and $I_2=I-I_1$.  We have
\begin{align*}
\tilde{I_1}&\geq \int_{(2\,\lambda_1+\lambda_2)/3}^{(3\,\lambda_1+\lambda_2)/4}\dots
\gtrsim \frac{\al^{1+k+2(k-1)+1} e^{-\left(x_{2}-x_{1}\right)/3}\,\left(\bl+\beta_X \al\left(\lambda_{2}-\nu_{2}\right)\right)}{(1+\beta_X\al)^{k+1}}.
\end{align*}

Next, for $\tilde{I_2}$,  we proceed similarly as in \eqref{est:I2less1} and \eqref{est:I2more1}.
\begin{align}
\tilde{I_2}&\lesssim \al^{k} \,e^{-\left(x_{2}-x_{1}\right)\al/2}\,\int_{\lambda_2}^{M_1}e^{-\left(x_{2}-x_{1}\right) \left(\lambda_{1}-\nu_{1}\right)\bl}
\frac{\left(\nu_{1}-\nu_{2}\right) (\nu_1-\lambda_2)^{k-1} (\nu_1-\lambda_3)^{k-1}  \left(\bl+\beta_X \left(\nu_1-\lambda_3\right)\bl\right)}{(1+\beta_X(\nu_1-\nu_2))^{k+1}} d\nu_1 \nonumber\\
&\leq \al^{k} \bl\,e^{-\left(x_{2}-x_{1}\right)\al/2}\int_{\lambda_2}^{M_1} 
\frac{\left(\nu_{1}-\nu_{2}\right) (\nu_1-\lambda_2)^{k-1} (\nu_1-\lambda_3)^{k-1} }{(1+\beta_X(\nu_1-\nu_2))^{k}} d\nu_1 
\lesssim \frac{\al^{3k} \bl\,e^{-\left(x_{2}-x_{1}\right) \al/2}
}{(1+\beta_X\al)^{k}} \label{K3}.
\end{align}

Hence,
\begin{align*}
\frac{\tilde{I_1}}{\tilde{I_2}}\gtrsim\frac{e^{\left(x_{2}-x_{1}\right) \al/6}\,\left(\bl+\beta_X \al\left(\lambda_{2}-\nu_{2}\right)\right)}{\bl(1+\beta_X\al)}
\geq \frac{1+\left(x_{2}-x_{1}\right) \al/6}{1+\beta_X\al}\geq 1/6.
\end{align*}

Now,
\begin{align*}
I_1(X,\lambda)
&\asymp \frac{\al^{1+k+2(k-1)}}{(1+\beta_X\al)^{k+1}} \int_{\lambda_3}^{\lambda_2} \int_{M_1}^{\lambda_1} e^{-\left(x_{2}-x_{1}\right) \left(\lambda_{1}-\nu_{1}\right)-\left(x_{3}-x_{1}\right) \left(\lambda_{2}-\nu_{2}\right)}
\left(\bl+\beta_X \al \left(\lambda_{2}-\nu_{2}\right)\right)
\\
 &\qquad\qquad\qquad
\,(\lambda_1-\nu_1)^k (\lambda_2-\nu_2)^{k-1}(\nu_2-\lambda_3)^{k-1} d\nu_1d\nu_2\\
&\asymp \frac{\al^{3k-1}}{(1+\beta_X\al)^{k+1}} \left[ \int_{M_1}^{\lambda_1} e^{-\left(x_{2}-x_{1}\right) \left(\lambda_{1}-\nu_{1}\right)}
\,(\lambda_1-\nu_1)^k d\nu_1\right]\\
%%%%%%%%%%%%%%%%%%%%%%%%
\qquad\qquad& \cdot\left[\bl\,\int_{\lambda_3}^{\lambda_2}  e^{-\left(x_{3}-x_{1}\right) \left(\lambda_{2}-\nu_{2}\right)}
(\lambda_2-\nu_2)^{k-1}(\nu_2-\lambda_3)^{k-1} d\nu_2
\right.\\&\qquad\qquad\left.
+\bx\al\int_{\lambda_3}^{\lambda_2}  e^{-\left(x_{3}-x_{1}\right) \left(\lambda_{2}-\nu_{2}\right)}
(\lambda_2-\nu_2)^{k}(\nu_2-\lambda_3)^{k-1} d\nu_2\right]\\
&\asymp \frac{\al^{3k-1}}{(1+\beta_X\al)^{k+1}} \frac{\al^{k+1}}{(1-\ax\al)^{k+1}}
\left[\bl\,\frac{\bl^{2k-1}}{(1+(x_3-x_1)\bl)^{k}}
+\bx\al\frac{\bl^{2k}}{(1+(x_3-x_1)\bl)^{k+1}}\right]\\
&=\frac{\al^{4k}\bl^{2k}(1+(x_3-x_1)\bl+\bx\al)}{(1+\beta_X\al)^{k+1}(1-\ax\al)^{k+1}(1+(x_3-x_1)\bl)^{k+1}}.
\end{align*}

We conclude by considering two cases $(x_3-x_1)\bl\ge\bx\al$ and $(x_3-x_1)\bl\le\bx\al$.

\bigskip

We now consider the case $\bl\ge \al$.

{\bf Step 1.}
We use the  formula \eqref{alpha-} from Lemma
\ref{J1alpha}.
We will estimate the integral
$I(X,\lambda)$ given
by
\begin{align}\label{start231}
I=I(X,\lambda)&\asymp\int_{\lambda_3}^{\lambda_2}\int_{\lambda_2}^{\lambda_1}
\frac{\left(\nu_{1}-\nu_{2}\right) \left(\alpha_{\lambda}-\alpha_{X} \left(\lambda_{1}-\nu_{1}\right) \left(\lambda_{2}-\nu_{2}\right)\right)}{(1-\alpha_X(\nu_1-\nu_2))^{k+1}}
\\ &\qquad\qquad\qquad
\,(\nu_1-\lambda_3) (\nu_2-\lambda_3)
e^{-\left(x_{2}-x_{3}\right) \left(\lambda_{1}-\nu_{1}\right)-\left(x_{3}-x_{1}\right) \left(\nu_{2}-\lambda_{3}\right)}
\,W_k(\lambda,\nu) d\nu_1d\nu_2.\nonumber
\end{align}

If $x_3-x_1\geq x_2 -  x_3$ the proof  is also similar to the previous cases since then $x_2-x_1\asymp x_2-x_1$.

\bigskip

The case $\bl\ge \al$ and $\bpx \le \apx$ remains. The proof is more involved than in preceding cases.

{\bf Step 2.} 
We define $I_1 = \int_{\lambda_3}^{\lambda_2} \int_{M_1}^{\lambda_1} \ldots$ 
and $I_2=I-I_1$.    As usual, we consider instead $\tilde{I}_1$ and $\tilde{I}_2$. We will show that $\tilde{I}_1\geq \tilde{I_2}$. We have, using $\nu_1-\nu_2\asymp \lambda_1-\nu_2$,
%\begin{align*}
%I_1&=\int_{\lambda_3}^{\lambda_2} \int_{M_1}^{\lambda_1} 
%\frac{\left(\nu_{1}-\nu_{2}\right) \left(\alpha_{\lambda}-\alpha_{X} \left(\lambda_{1}-\nu_{1}\right) \left(\lambda_{2}-\nu_{2}\right)\right)}{(1-\alpha_X(\nu_1-\nu_2))^{k+1}}
%\\ &\qquad\qquad\qquad
%\,(\nu_1-\lambda_3) (\nu_2-\lambda_3)
%e^{-\left(x_{2}-x_{3}\right) \left(\lambda_{1}-\nu_{1}\right)-\left(x_{3}-x_{1}\right) \left(\nu_{2}-\lambda_{3}\right)}
%\,W_k(\lambda,\nu),d\nu_1d\nu_2
%\end{align*}
\begin{align*}
\tilde{I}_1\geq\int_{(2\,\lambda_1+\lambda_2)/3} ^{(3\,\lambda_1+\lambda_2)/4}
\,[\dots]\gtrsim e^{-\left(x_{2}-x_{3}\right)\al/3} \,\al^{2\,(k-1)+2} \bl^{k}
(\lambda_1-\nu_2)
\frac{\left(1-\alpha_{X}\left(\lambda_{2}-\nu_{2}\right)\right)}{(1-\alpha_X(\lambda_1-\nu_2))^{k+1}}
\end{align*}
while, similarly as in \eqref{est:I2less1}  and \eqref{est:I2more1},
\begin{align}
\tilde{I}_2&\lesssim e^{-\left(x_{2}-x_{3}\right)\al/2} \,\al^{k-1} \bl^{k}
\int_{\lambda_2}^{M_1} 
\frac{(\nu_1-\nu_2) (\nu_1-\lambda_2)^{k-1}}{(1-\alpha_X(\nu_1-\nu_2))^{k+1}}
(\al-\alpha_X\al (\lambda_2-\nu_2))\,d\nu_1\nonumber\\
&\lesssim e^{-\left(x_{2}-x_{3}\right)\al/2} \,\al^{1+(k-1)} \bl^{k}
\int_{\lambda_2}^{M_1} 
\frac{(\nu_1-\nu_2) (\nu_1-\lambda_2)^{k-1}}{(1-\alpha_X(\nu_1-\nu_2))^{k}}\,d\nu_1
\lesssim \frac{\al^{2k}\bl^k(\lambda_1-\nu_2)}{(1-\alpha_X\al)^{k}}.\label{K1}
\end{align}

\begin{align*}
\frac{\tilde{I}_1}{\tilde{I}_2}
&\gtrsim
\frac{e^{\left(x_{2}-x_{3}\right)\al/6}
\left(1-\alpha_{X}\left(\lambda_{2}-\nu_{2}\right)\right)}{(1-\alpha_X(\lambda_1-\nu_2))}
\gtrsim \frac{(1+\left(x_{2}-x_{3}\right)\al/6)
\left(1-\alpha_{X}\left(\lambda_{2}-\nu_{2}\right)\right)}{(1-\alpha_X(\lambda_1-\nu_2))}\\
&\gtrsim \frac{(1/6-\alpha_X\al/6)
\left(1-\alpha_{X}\left(\lambda_{2}-\nu_{2}\right)\right)}{(1-\alpha_X(\lambda_1-\nu_2))}
\geq \frac{(1/6-\alpha_X(\al+\lambda_2-\nu_2)/6)}{(1-\alpha_X(\lambda_1-\nu_2))}=1/6.
\end{align*}
since $x_{2}-x_{3}\asymp -\alpha_X$.

{\bf Step 3. Estimation of $I_1$.} By Step 2, in order to estimate  $I$, it is enough  to estimate $I_1$. We have
\begin{align*}
I\asymp I_1&=\al^{k-1} \bl^k\,\left\lbrace\left[\al
 \int_{M_1}^{\lambda_1} 
e^{-\left(x_{2}-x_{3}\right) \left(\lambda_{1}-\nu_{1}\right)}
\,(\lambda_1-\nu_1)^{k-1}\,d\nu_1
\right]\right.
\\&\qquad\qquad\qquad\left.
\,\left[\int_{\lambda_3}^{\lambda_2} 
e^{-\left(x_{3}-x_{1}\right) \left(\nu_{2}-\lambda_{3}\right)}
\frac{(\lambda_1-\nu_2)^k \left(\lambda_{2}-\nu_{2}\right)^{k-1}(\nu_2-\lambda_3)^{k+1}}{(1-\alpha_X(\lambda_1-\nu_2))^{k+1}}d\nu_2\right]
\right.\\&\qquad\qquad\qquad\left.
-\alpha_X\,\left[\al
 \int_{M_1}^{\lambda_1} 
e^{-\left(x_{2}-x_{3}\right) \left(\lambda_{1}-\nu_{1}\right)}
\,(\lambda_1-\nu_1)^{k}\,d\nu_1
\right]\right.
\\&\qquad\qquad\qquad\left.
\,\left[\int_{\lambda_3}^{\lambda_2} 
e^{-\left(x_{3}-x_{1}\right) \left(\nu_{2}-\lambda_{3}\right)}
\frac{(\lambda_1-\nu_2)^k \left(\lambda_{2}-\nu_{2}\right)^{k}(\nu_2-\lambda_3)^{k+1}}{(1-\alpha_X(\lambda_1-\nu_2))^{k+1}}d\nu_2\right]\right\rbrace\\
&\asymp\al^{k-1} \bl^k\,\left\lbrace\left[\al
 \int_{M_1}^{\lambda_1} 
e^{-\left(x_{2}-x_{3}\right) \left(\lambda_{1}-\nu_{1}\right)}
\,(\lambda_1-\nu_1)^{k-1}\,d\nu_1
\right]\right.
\\&\qquad\qquad\qquad\left.
\,\left[\frac{\bl^{2k-1}}{(1-\alpha_X \bl)^{k+1}}
\int_{\lambda_3}^{M_2} 
e^{-\left(x_{3}-x_{1}\right) \left(\nu_{2}-\lambda_{3}\right)} (\nu_2-\lambda_3)^k\,d\nu_2
\right.\right.\\&\qquad\qquad\qquad\left.\left.
+\bl^k \int_{M_2}^{\lambda_2} 
e^{-\left(x_{3}-x_{1}\right) \left(\nu_{2}-\lambda_{3}\right)}
\frac{(\lambda_1-\nu_2)^k \left(\lambda_{2}-\nu_{2}\right)^{k-1}}{(1-\alpha_X(\lambda_1-\nu_2))^{k+1}}d\nu_2
\right]
\right.\\&\qquad\qquad\qquad\left.
-\alpha_X\,\left[\al
 \int_{M_1}^{\lambda_1} 
e^{-\left(x_{2}-x_{3}\right) \left(\lambda_{1}-\nu_{1}\right)}
\,(\lambda_1-\nu_1)^{k}\,d\nu_1
\right]\right.
\\&\qquad\qquad\qquad\left.
\,\left[\frac{\bl^{2k}}{(1-\alpha_X \bl)^{k+1}}\int_{\lambda_3}^{M_2} 
e^{-\left(x_{3}-x_{1}\right) \left(\nu_{2}-\lambda_{3}\right)}
(\nu_2-\lambda_3)^{k+1}\,d\nu_2
\right.\right.\\&\qquad\qquad\qquad\left.\left.
+\bl^k \int_{M_2}^{\lambda_2} 
e^{-\left(x_{3}-x_{1}\right) \left(\nu_{2}-\lambda_{3}\right)}
\frac{(\lambda_1-\nu_2)^k \left(\lambda_{2}-\nu_{2}\right)^{k}}{(1-\alpha_X(\lambda_1-\nu_2))^{k+1}}d\nu_2\right]\right\rbrace\\
&=\frac{\al^{2k}\bl^k}{(1+(x_2-x_3)\al)^{k}}
\left[\frac{\bl^{3k}}{(1-\alpha_X\bl)^{k+1}(1+(x_3-x_1)\bl)^{k+1}}
+\al^k\bl^k\,U(k-1)+\bl^k\,U(2k-1)\right]
\\&\qquad\qquad\qquad\qquad
+\frac{-\alpha_X\al^{2k}\bl^k}{(1+(x_2-x_3)\al)^{k+1}}
\left[\frac{\bl^{3k+1}}{(1-\alpha_X\bl)^{k+1}(1+(x_3-x_1)\bl)^{k+1}}
+\al^k\bl^k\,U(k)+\bl^k\,U(2k)\right]
\end{align*}
using $(\lambda_1-\nu_2)^k=(\al+\lambda_2-\nu_2)^k\asymp \al^k+(\lambda_2-\nu_2)^k$ and
where 
$$
U(p)= \int_{M_2}^\ll
\frac{e^{\gx(v-\lll)}(\ll-v)^p}
{(1-\ax\al-\ax(\ll-v))^{k+1}}dv.
$$
We then obtain
$$
I(X, \lambda) \asymp  I_1
\asymp c_1(t_1+t_2+t_3) + c_2(T_1+T_2+T_3),
$$
where
\begin{align*}
&c_1= \frac{\al^{2k} \bl^k}{(1+\bx\al)^k},\qquad
c_2 = \frac{-\ax\al^{2k} \bl^k}{(1+\bx\al)^{k+1}},
\\
&t_1= \frac{\bl^{3k}}{(1-\ax\bl)^{k+1}
(1-\gx\bl)^{k+1}}, \qquad
t_2=\al^k\bl^k U(k-1), \qquad
t_3=\bl^k U(2k-1)
\\
&T_1 =\frac{\bl^{3k+1}}{(1-\ax\bl)^{k+1}
(1-\gx\bl)^{k+1}}, \qquad 
T_2=\al^k\bl^k U(k)
, \qquad
T_3=\bl^k U(2k).
\end{align*}

The rest of the proof is straightforward but tedious. We will only give its outline and
omit the details.

In order to estimate the four needed integrals of the form $U(p)$, with 
$p=k-1$, $2k-1$, $k$, $2k$,  it is useful to consider four cases:

\begin{itemize}
\item[(C1)] $-\ax\al \le 1, -\gx\bl\le 1$,

\item[(C2)] $-\ax\al \ge 1, -\gx\bl\le 1$,

\item[(C3)] $-\ax\al \le 1, -\gx\bl\ge 1$,

\item[(C4)] $-\ax\al \ge 1, -\gx\bl\ge 1$.
\end{itemize}

We work separately with $k>1$, $k=1$ and $k< 1$.
In the cases (C3) and (C4), we use the estimate
$$
\int_1^a e^x x^M dx\asymp  e^a a^M,
$$
for $a\geq 2$.  Otherwise, the integrals are easily estimated.  In all cases, one notes that $\al^k U(k-1)\lesssim U(2k-1)$ and $\al^k U(k)\lesssim U(2k)$.

Note that for $p=k$  the estimate of $U(k)$ has a logarithmic factor, which is also the case when $k=1$ for $U(2k-1)=U(1)=U(k)$.  However, these terms are always dominated by other non-logarithmic terms, thanks to the inequality $\ln x\le x^k$, for $x$ large enough.

When $k>1$, after estimating the integrals $U(p)$, we check that $t_1\ge t_2+t_3$ and $T_1\ge T_2+T_3$ so that
$I(\lambda,X)\asymp c_1t_1+c_2T_1$ and the estimate follows by putting the sum on a common denominator.

When $k\le 1$, we consider two following cases.  When $-\gx\al\ge 1$, the proof is similar as for $k>1$.

When $-\gx\al\le 1$, it is easy to see that $t_3\ge t_2$
and that $T_1\ge T_3 \ge T_2$. Next, we check that 
$c_2T_1/ (c_1t_1)\ge 1$ and $c_2T_1/ (c_1t_3)\ge 1$.
Finally, $I(X,\lambda) \asymp c_2T_1$ and the estimate follows.

\subsection{ Weyl chamber $C_{213}$}
%\pg{CE CHAPITRE POURRAIT ETRE AVANT DERNIER}

If we assume $\al\geq \bl$ and we use formula \eqref{alpha-}, we easily prove the relevant estimate using the same approach as for, say, $C_{321}$.  
If $\bl\geq\al$ and $x_1-x_3\geq 	x_2-x_1$, the same approach works with formula \eqref{beta+} since $x_3-x_1\asymp x_2-x_3$.  We then consider the case $\bl\geq\al$ and $x_1-x_3\leq 	x_2-x_1$ so that 
$x_2-x_1\asymp x_2-x_3$. Using formula \eqref{beta+}, we have
\begin{align*}
I_1
&\asymp \int_{\lambda_3}^{\lambda_2} \int_{M_1}^{\lambda_1} \,e^{-\left(x_{2}-x_{1}\right) \left(\lambda_{1}-\nu_{1}\right)-\left(x_{1}-x_{3}\right) \left(\nu_{2}-\lambda_{3}\right)}
\frac{\left(\nu_{1}-\nu_{2}\right) \left(\bl+\beta_X \left(\nu_{1}-\lambda_{3}\right) \left(\lambda_{2}-\nu_{2}\right)\right)}{(1+\beta_X(\nu_1-\nu_2))^{k+1}}
\\
 &\qquad\qquad\qquad
\,(\lambda_1-\nu_1) (\lambda_1-\nu_2)
\,W_k(\lambda,\nu)\,d\nu_1d\nu_2
\end{align*}
and $I_2=I-I_1$.

Now, since  $\lambda_1-\nu_2 \asymp \nu_1-\nu_2$, we have:
\begin{align*}
\tilde{I_1}&\geq \int_{(2\,\lambda_1+\lambda_2)/3}^{(3\,\lambda_1+\lambda_2)/4}\dots 
\gtrsim \frac{\al^{k+(k-1)+1} \bl^{k-1} e^{-\left(x_{2}-x_{1}\right) \al/3} (\lambda_1-\nu_2)  \left(\bl+\beta_X \bl\left(\lambda_{2}-\nu_{2}\right)\right)}{(1+\beta_X(\lambda_1-\nu_2))^{k+1}}.
\end{align*}

Now we proceed similarly as in \eqref{est:I2less1}
and \eqref{est:I2more1}.
\begin{align}
 \tilde{I_2}&\lesssim\al^{k} \bl^{k-1} e^{-\left(x_{2}-x_{1}\right) \al/2} \int_{\lambda_2}^{M_1} 
\frac{\left(\nu_{1}-\nu_{2}\right)\left(\bl+\beta_X \bl \left(\lambda_{2}-\nu_{2}\right)\right)}{(1+\beta_X(\nu_1-\nu_2))^{k+1}}
(\nu_1-\lambda_2)^{k-1} \,d\nu_1\nonumber\\
&\leq\al^{k} \bl^{k} e^{-\left(x_{2}-x_{1}\right) \al/2} \int_{\lambda_2}^{M_1} 
\frac{\left(\nu_{1}-\nu_{2}\right)}{(1+\beta_X(\nu_1-\nu_2))^{k}}
(\nu_1-\lambda_2)^{k-1} \,d\nu_1\lesssim \frac{\al^{2k} \bl^{k} (\lambda_1-\nu_2) e^{-\left(x_{2}-x_{1}\right) \al/2} }{(1+\beta_X(\lambda_1-\nu_2))^{k}}.\label{K4}
\end{align}

Hence,
\begin{align*}
\frac{\tilde{I_1}}{\tilde{I_2}}
&\gtrsim \frac{e^{\left(x_{2}-x_{1}\right) \al/6}\, \left(1+\beta_X\left(\lambda_{2}-\nu_{2}\right)\right)}{1+\beta_X(\lambda_1-\nu_2)}
\gtrsim \frac{(1+\bx\al/6)\, \left(1+\beta_X \left(\lambda_{2}-\nu_{2}\right)\right)}{1+\beta_X(\lambda_1-\nu_2)}\\
&\geq \frac{1+\bx(\lambda_1-\nu_{2})/6}{1+\beta_X(\lambda_1-\nu_2)}\geq 1/6.
\end{align*}

We then have $I\asymp I_1$ and since $\lambda_1-\nu_2\asymp \nu_1-\nu_2$,
\begin{align*}
I_1
&\asymp \al^{k-1} \bl^{k} \int_{\lambda_3}^{\lambda_2} \int_{M_1}^{\lambda_1} \,e^{-\left(x_{2}-x_{1}\right) \left(\lambda_{1}-\nu_{1}\right)-\left(x_{1}-x_{3}\right) \left(\nu_{2}-\lambda_{3}\right)}
\frac{\left(\lambda_{1}-\nu_{2}\right)^{k+1} \left(1+\beta_X \left(\lambda_{2}-\nu_{2}\right)\right)}{(1+\beta_X(\lambda_1-\nu_2))^{k+1}}
\\
 &\qquad\qquad\qquad
\,(\lambda_1-\nu_1)^k (\lambda_2-\nu_2)^{k-1} (\nu_2-\lambda_3)^{k-1}
\,d\nu_1d\nu_2\\
&=\al^{k-1} \bl^{k} \left[\int_{M_1}^{\lambda_1} \,e^{-\left(x_{2}-x_{1}\right) \left(\lambda_{1}-\nu_{1}\right)}\,(\lambda_1-\nu_1)^k \,d\nu_1\right]
\\&\qquad\cdot
\,\left[ \int_{\lambda_3}^{\lambda_2} \,e^{-\left(x_{1}-x_{3}\right) \left(\nu_{2}-\lambda_{3}\right)}
\frac{\left(\lambda_{1}-\nu_{2}\right)^{k+1} (\lambda_2-\nu_2)^{k-1} (\nu_2-\lambda_3)^{k-1}}{(1+\beta_X(\lambda_1-\nu_2))^{k+1}}
\,d\nu_2
\right.\\&\qquad\qquad+\bx \left.  \int_{\lambda_3}^{\lambda_2} \,e^{-\left(x_{1}-x_{3}\right) \left(\nu_{2}-\lambda_{3}\right)}
\frac{\left(\lambda_{1}-\nu_{2}\right)^{k+1} (\lambda_2-\nu_2)^{k} (\nu_2-\lambda_3)^{k-1}}{(1+\beta_X(\lambda_1-\nu_2))^{k+1}}
\,d\nu_2\right].
\end{align*}

Now, let $J_1= \int_{\lambda_3}^{M_2} \dots$ and  $J_2= \int_{M_2}^{\lambda_2} \dots$ in the first integral involving $\nu_2$.  We have
\begin{align*}
J_1\geq \int_{(3\lambda_3+\lambda_2)/4}^{(2\lambda_3+\lambda_2)/3}  \gtrsim \frac{\bl^{(k+1)+2(k-1)+1} e^{-\left(x_{1}-x_{3}\right) \bl/3}}{(1+\beta_X\bl)^{k+1}}
\end{align*}
while
\begin{align*}
J_2&\lesssim e^{-\left(x_{1}-x_{3}\right) \bl/2} \int_{M_2}^{\lambda_2} 
\frac{\left(\lambda_{1}-\nu_{2}\right)^{k+1} (\lambda_2-\nu_2)^{k} (\nu_2-\lambda_3)^{k-1}}{(1+\beta_X(\lambda_1-\nu_2))^{k+1}}
\,d\nu_2\\
&\lesssim e^{-\left(x_{1}-x_{3}\right) \bl/2} \frac{\left(\lambda_{1}-\lambda_3\right)^{k+1} }{(1+\beta_X(\lambda_1-\lambda_3))^{k+1}} 
\int_{M_2}^{\lambda_2}  (\lambda_2-\nu_2)^{k} (\nu_2-\lambda_3)^{k-1}\,d\nu_2
\asymp \frac{\bl^{3k+1}e^{-\left(x_{1}-x_{3}\right) \bl/2} }{(1+\beta_X\bl)^{k+1}}\lesssim J_1.
\end{align*}

We proceed in the same manner for the second integral in $\nu_2$.

We have
\begin{align*}
I_1
&\asymp \al^{k-1} \bl^{k} \left[\int_{M_1}^{\lambda_1} \,e^{-\left(x_{2}-x_{1}\right) \left(\lambda_{1}-\nu_{1}\right)}\,(\lambda_1-\nu_1)^k \,d\nu_1\right]
\\&\qquad\cdot
\,\left[ \int_{\lambda_3}^{M_2} \,e^{-\left(x_{1}-x_{3}\right) \left(\nu_{2}-\lambda_{3}\right)}
\frac{\left(\lambda_{1}-\nu_{2}\right)^{k+1} (\lambda_2-\nu_2)^{k-1} (\nu_2-\lambda_3)^{k-1}}{(1+\beta_X(\lambda_1-\nu_2))^{k+1}}
\,d\nu_2
\right.\\&\qquad\qquad+\bx \left.  \int_{\lambda_3}^{M_2} \,e^{-\left(x_{1}-x_{3}\right) \left(\nu_{2}-\lambda_{3}\right)}
\frac{\left(\lambda_{1}-\nu_{2}\right)^{k+1} (\lambda_2-\nu_2)^{k} (\nu_2-\lambda_3)^{k-1}}{(1+\beta_X(\lambda_1-\nu_2))^{k+1}}
\,d\nu_2\right]\\
&\asymp   \al^{k-1} \bl^{k} \left[\int_{M_1}^{\lambda_1} \,e^{-\left(x_{2}-x_{1}\right) \left(\lambda_{1}-\nu_{1}\right)}\,(\lambda_1-\nu_1)^k \,d\nu_1\right]
\\&\qquad\cdot
\,\left[\frac{\bl^{(k+1)+(k-1)} }{(1+\beta_X\bl)^{k+1}} + \frac{\bl^{(k+1)+k} }{(1+\beta_X\bl)^{k+1}} \right]\int_{\lambda_3}^{M_2} \,e^{-\left(x_{1}-x_{3}\right) \left(\nu_{2}-\lambda_{3}\right)}
 (\nu_2-\lambda_3)^{k-1}
\,d\nu_2\\
&\asymp \al^{k-1} \bl^{k} \frac{\al^{k+1}}{(1+(x_2-x_1)\al)^{k+1}} 
\,\frac{\bl^{2k}(1+\bx\bl) }{(1+\beta_X\bl)^{k+1}}\frac{\bl^{k}}{(1+(x_3-x_1)\bl)^{k}}
\end{align*}
which gives the correct estimate.

\end{document}